\newtheorem{theorem}{Theorem}
\newtheorem{corollary}[theorem]{Corollary}
\newtheorem{proposition}[theorem]{Proposition}
\newtheorem{remark}[theorem]{ Remark}
\renewcommand{\Im}{\mathrm{Im}}
\renewcommand{\Re}{\mathrm{Re}}
\newcommand{\Int}{\displaystyle \int}
\def\HH{{\mathcal H}}
\def\R{{\mathbb R}}
\newcommand\dint{\displaystyle\int}
\newcommand\dlim{\displaystyle\lim}
\begin{document}
%\title{On the electric Stark resonances in two-dimensional case }
%\title{Curvature Stark resonances in a two-dimensional.}
\title {\bf Stark resonances in   a  quantum   waveguide with analytic curvature}
\author{Philippe Briet \footnote{e-mail: briet@univ-tln.fr}, \\
{\small   Aix-Marseille Universit\'{e}, CNRS, CPT UMR 7332, 13288 Marseille, France, and } \\ 
{ \small Universit\'{e} de Toulon, CNRS, CPT UMR 7332, 83957 La Garde, France}\\
and \\
Mounira Gharsalli  \footnote{e-mail: gharsallimounira@gmail.com}\\
 {\small $^2$Laboratoire EDP, LR03ES04, D\' epartement de Math\' ematiques, Facult\'e des Sciences de Tunis,} \\
{  \small Universit\' e de  Tunis El Manar, El Manar 2092 Tunis, Tunisie}}
\noindent

\date{}
\maketitle

Keywords:  Resonance, Operator Theory,  Schr\" odinger Operators, Waveguide.
\medskip

MSC-2010 number: 35B34,35P25, 81Q10, 82D77
%\\
%%
\begin{abstract} 
We investigate the influence of an electric field on   trapped modes arising in   a two-dimensional curved quantum waveguide ${\bf \Omega}$ i.e. bound states of  the corresponding   Laplace operator  $-\Delta_{ {\bf \Omega} }$.  Here the curvature  of the guide is supposed to satisfy  some assumptions  of analyticity,  and  decays as $O(|s|^{-\varepsilon}), \varepsilon >3$ at infinity.  We show that  under conditions on the electric field  $ \bf F$, ${\bf H}(F):= -\Delta_{ {\bf \Omega} } + {\bf F}.  {\bf x} $ has  resonances near  the discrete eigenvalues of $-\Delta_{ {\bf \Omega} }$.
\end{abstract}
\section{Introduction}

This paper is a continuation and extension of earlier work 
\cite{BG}. 
 Let us recall  the problem; for more details we refer to \cite{BG, Herbst}. The object of our interest is the Stark operator 
 \begin{equation}
 \label{HF}
{ \bf H}({F})=-\Delta_{ {\bf \Omega} }+ {\bf F}\cdot {\bf x },\,\, {\bf F} \in  \mathbb R^2, \; {\bf x }=(x,y) \in \bf \Omega
\end{equation}
 on $L^{2}(\bf\Omega)$ where  $\bf\Omega$ is  a curved strip in $\R^{2}$  of constant width $d>0$  defined around a smooth  curve $\Gamma$.  
 The operator $-\Delta_{ {\bf \Omega} }$ is defined in a standard way   by means of DBC on  the boundary of ${\bf \Omega}$, $\partial  \bf\Omega$ \cite{reed simonIV}.
 
  We assume that $\bf \Omega$ is not straight,
let $ s \in \mathbb R \to \gamma(s) $ be  the signed curvature of  $\Gamma$.  In \cite{BG} it is  supposed that     $ \gamma \in  C_0^2(\mathbb R)$ and  $d\Vert \gamma\Vert_\infty <1 $. Here we consider a more general situation  namely we assume  that
 \begin{itemize}
\item[(h1)] $ \gamma \in  C^2(\mathbb R)$ and   there exist $ a_0, r_0 >0$  s.t. $\gamma$  has   an analytic extension in 
$$ {\mathcal O}_{a_0, r_0} =\{z \in \mathbb C, \; \vert  \arg z \vert < a_0 \}  \cup \{z \in \mathbb C, \; \vert \pi -  \arg z  \vert < a_0 \} \cap  \{  \vert \Re z  \vert > r_0 \}.$$    Moreover  $\gamma$  satisfies  $d\| \Re\gamma\|_{\infty}< 1$.

\item[(h2)]  There exists $\varepsilon > 3 $ s.t. $\gamma(z)= O(|z|^{-\varepsilon})$  as $ \vert  \Re z\vert \to
\infty.$
\end{itemize}
 Then  $\bf \Omega$ is asymptotically straight. We choose the lower boundary of $\bf \Omega$  near $s=-\infty$  as the reference curve. Introduce  orthogonal coordinates $(s, u) \in \Omega:= \mathbb R \times  (0,d)$,   related to   $(x,y) \in \bf \Omega$   via the relations  \cite{ES}, 
  \begin{equation} \label {CV}
 x(s,u)=  \int_0^s \cos (\alpha(t))dt - u \sin(\alpha(s)), \; 
 y(s,u)=  \int_0^s \sin (\alpha(t))dt + u \cos(\alpha(s))
 \end{equation}
 where $\alpha(s)=\dint_{-\infty}^{s}\gamma(t)\,dt$. Set  $\alpha_{0}=\dint_{-\infty}^{+\infty}\gamma(t)\,dt$.
 
Since  we have supposed $ d  \Vert \gamma \Vert_\infty <1$, the operator $ -\Delta_{ {\bf \Omega} }$  is  unitarily equivalent to
 \begin{equation}
\label{hamilt}
H=H_{0} + V_0,  H_0 =T_s +T_u
\end{equation} 
in the  space $L^{2}(\Omega)$,  with DBC  on $ \partial \Omega$ \cite{ES}, where
\begin{equation}  \label{g}
T_{s}:= -\partial_{s}g\partial_{s},\; \; T_u: = -\partial_{u}^{2}, \; \;  g(s,u) =(1+u \gamma(s))^{-2},
\end{equation}
 and
 \begin{equation} \label{V0}
V_{0}(s,u)=-\frac{\gamma(s)^{2}}{4(1+u\gamma(s))^{2}}+\frac{u\gamma''(s)}{2(1+u\gamma(s))^{3}}-\frac{5}{4}\frac{u^{2}\gamma'(s)^{2}}{(1+u\gamma(s))^{4}}.
\end{equation}
 With our assumptions, the potential $V_0$  is bounded and then   $ H= H_0 + V_0$ is a  self-adjoint  operator   with domain
  \cite{G,Kri},
  \begin{equation} \label {DH}
 D(H)  = D(H_0)=  {\HH}_0^1 ( \Omega) \cap {\mathcal H}^2(\Omega).
 \end{equation}
Here and hereinafter we use standard notation for  Sobolev space.  Moreover the essential spectrum of this operator, $ \sigma_{ess}(H)= [\lambda_0, + \infty),\,\,\, $ where   $\{ \lambda_0, \lambda_1,.... \}$ are  the transverse modes of the system i.e.  the eigenvalues of  the operator $-\partial_u^2 $ on $ L^2(0,d)$ with DBC on the boundary $\{0,d\}$ \cite{ED}.  \\
% and  $ \sigma(T_u)=\{\lambda_j\}_{j=1}^{\infty}$, with $\lambda_j=\left(\frac{\pi j}{d}\right)^{2}$.
Denote the exterior field as  ${\bf F}=F(\cos(\eta), \sin(\eta))$. With respect to the new coordinates,  the  field interaction  is then 
  \begin{equation} \label{W1}
   W(F)(s,u):=  {\bf F}\cdot {\bf x } = F\dint_{0}^{s}\cos\left(\eta-\alpha(t)\right)\,dt + F u \sin\left(\eta-\alpha(s)\right).
 \end{equation}
 
Here  we study a field regime which was  not considered so far   i.e. the intensity of the field  is the free parameter  in $ 0<F< 1$ and the direction $\eta$ is  fixed satisfying
\begin{equation}
\label{eta}
 |\eta|<\frac{\pi}{2}\,\,\,\,\,\,\mbox {and}\,\,\,\,\,\, |\eta-\alpha_{0}|>\frac{\pi}{2}.
 \end{equation}
As  discussed in the Section 2,  this  implies that   $ W(F)(s,u) \to   -\infty  $ as $ s \to \pm \infty $.   Thus 
the non trapping region for a given negative energy $E$  contains both  a neighbourhood of $s=-\infty$ and $s = \infty$.

 We denote   by $H_0(F)  =T_s +T_u + W(F)$ and $H(F)  =H_0(F)  +V_0$. Then a straightforward extension of the Theorem $2.1$  of \cite{BG} shows that  for $F>0$, the Stark operator  $ H(F)$ is essentially self-adjoint
  and $\sigma(H(F))=\R$.\\
We are interested in  the study  of the influence of the electric field on    the discrete spectrum  of $H$.   We   want to show  that the eigenvalues  of $H$ give rise to resonances for the Stark operator $ H(F), F>0$.
The resonances of $H(F)$  are understood in the standard way \cite{AC, Hu, reed simonIV}. Evidently if $ H $ has no  discrete eigenvalue below $\lambda_0$ then  this  result 
 proves  that $ H(F)$  has neither resonance or embedded eigenvalue  in $\{ z \in \mathbb C, \Re z <  \lambda_0 \}$.
 For  a discussion  about  eigenvalues of $H$ we refer the reader to \cite{ED}. \\

To study this problem we need an additional assumption,
\begin{itemize}
\item[(h3)] $ \Im \gamma (z ) \geq 0 $ for $ z \in  { \mathcal O}_{a_0, r_0}, 0\leq \arg z \leq a_0$ or $0 \leq \arg z -\pi \leq a_0$.
 \end{itemize}

\begin{remark} 
i)  In fact it is only  necessary to suppose that  the product $ u\Im \gamma (z ) \geq 0 $ for $ z \in  {\mathcal O}_{a_0, r_0}, 0\leq \arg z \leq a_0$ or $0 \leq \arg z -\pi \leq a_0 $. So the case $Im \gamma (z ) \leq 0 $ is reducing to the present one 
 by taking the  other boundary as reference curve. \\
ii) We can  check that   $ \gamma (s) = \frac{\alpha }{1+s^{2n}}; \; n \geq 2 $,  $ \alpha < 0$ satisfies our assumptions with $r_0 >1$ and $ 0 < a_0\leq \frac{\pi}{4n}$.

\end{remark}

%$u\gamma' (s)<0  $ near $- \infty$ and   $u\gamma'(s)> 0$. So the case where  $\gamma'(s)> 0$ for $s\rightarrow - \infty$, $\gamma'(s)>0 $ for  $s\rightarrow \infty$ being reduced to the present one by taking the other boundary as the reference curve. 

%The function $\gamma'(s)< 0$ for $ \Im s =0$ and $ s \rightarrow - \infty$ respectively,  $\gamma'(s)>0 $ for  $ \Im s =0$ and  $s\rightarrow \infty$.

The study  of   the Stark effect  was  considered by several authors, see e.g. \cite{PB, Hislop}  for a discussion  concerning  the case of  Shr\" odinger operators on $\mathbb R^n$ and  \cite { BG, Ex}  for  operators defined   on curved strips. In particular in   \cite {BG},   under assumptions   on the    curvature  above mentioned and if  $  |\eta|<\frac{\pi}{2}\,\,\mbox {and}\,\,\, |\eta-\alpha_{0}|<\frac{\pi}{2}$,  it is  proved  the existence of   Stark  resonances  having an  width  exponentially  small   w.r.t. $F$ as $F$  tends to zero. 
In this paper we would like  to extend this result under weaker assumptions on  $\gamma$ i.e. hypotheses  (h1-3)  and in the field regime \eqref{eta}. \\ More precisely we will show the following. 

%%%%%%%%%%%%%%thm
 \begin{theorem}  \label{t0}  Suppose (h1-3).  Let  $E_{0}$ be  an  discrete eigenvalue   of $H $ of  finite multiplicity $ n \in \mathbb N$.  There exist  $F_0 >0 $ and a  dense  subset  $ \mathcal A $  of $L^2(\Omega) $    such that  
 for   $ 0<F \leq  F_0 $ \\
$$ i)  \quad
\;  z \in \mathbb C, \Im z >0 \to  {\cal R}_{\varphi}(z)=  \big( ( H(F)-z)^{-1}\varphi, \varphi),\;  \varphi \in \mathcal A  $$
has an meromorphic extension in  a complex neighbourhood  $ \nu_{E_0}$ of $E_0 $,  through  the cut due to  the  presence  of  the continuous spectrum of $H(F)$. \\
ii)  $ \cup_{\varphi \in  \mathcal A}\{   { \rm poles \; of }  \; {\cal R}_\varphi(z) \} \cap \nu_{E_0}$ contains  $n$ poles $Z_0(F),...Z_{n-1}(F)$  converging to $E_0$ when $F\to0$.\\
iii)  Suppose that $E_0$ is a simple eigenvalue of $H$, Let $Z_0(F)$  as in ii). For Then there exist  two constants   $ 0< c_1,c_2$ such that for $0< F \leq  F_0 $,
$$  \vert \Im Z_0 \vert \leq  c_1 e^{-\frac{c_2}{F}}. $$
\end{theorem}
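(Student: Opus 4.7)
The plan is to apply the complex distortion method, the standard approach to Stark resonances, which is made available here by hypothesis (h1). Using the analyticity of $\gamma$ on the sectors $\mathcal{O}_{a_0,r_0}$, I would introduce a family of analytic distortions $\phi_\theta : \mathbb{R} \to \mathbb{C}$, $\theta \in \mathbb{R}$, equal to the identity for $|s|\le 2r_0$ and equal to $s$ plus a complex rotation/translation outside a larger compact set, chosen so that $\phi_\theta(s)\in\mathcal{O}_{a_0,r_0}$ for all $|s|$ large and all $\theta$ in a complex neighbourhood of the origin. The corresponding unitary family $U_\theta$ on $L^2(\Omega)$ conjugates $H(F)$ into
\begin{equation*}
H_\theta(F) = T_{s,\theta} + T_u + V_{0,\theta} + W_\theta(F),
\end{equation*}
where every $\gamma^{(k)}(s)$ is replaced by $\gamma^{(k)}(\phi_\theta(s))$ and $s$ by $\phi_\theta(s)$ in $W$. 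By (h1)--(h2) this extends to an analytic family of type (A) on $D(H_0)$ for $|\Im\theta|<a_0$, and by (h3) the factor $(1+u\gamma(\phi_\theta(s)))^{-1}$ remains uniformly bounded on the distorted strip.

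The second step is to control $\sigma_{\mathrm{ess}}(H_\theta(F))$ for $\Im\theta>0$. By (h2), $\gamma^{(k)}(\phi_\theta(s))$ vanishes at infinity, so $V_{0,\theta}$ is a relatively compact perturbation of $T_{s,\theta}+T_u+W_\theta(F)$. The condition \eqref{eta} forces $\cos\eta>0$ and $\cos(\eta-\alpha_0)<0$, so that, combined with the definite sign of $\Im\phi_\theta(s)$ at each end, $\Im W_\theta(F)(s,u)$ has a negative leading behaviour as $s\to\pm\infty$. A Weyl-type argument on the dilated free Stark operator then yields $\sigma_{\mathrm{ess}}(H_\theta(F))\cap\nu_{E_0}=\emptyset$ for a suitable complex neighbourhood $\nu_{E_0}$ of $E_0\in\mathbb{R}$. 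Part (i) follows by taking $\mathcal{A}$ to be the dense set of dilation-analytic vectors in $L^2(\Omega)$ and defining $\mathcal{R}_\varphi(z)=((H_\theta(F)-z)^{-1} U_\theta\varphi, U_{\bar\theta}\varphi)$ as the meromorphic extension.

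For (ii), I would use analytic perturbation theory in $F$. At $F=0$, dilation-analyticity preserves the discrete spectrum of $H$, so $E_0$ is an isolated eigenvalue of $H_\theta$ of multiplicity $n$. Since $W_\theta(F)$ is a controlled analytic perturbation (small on compact $s$-sets and handled at infinity by the essential-spectrum computation), the Riesz projection $P_\theta(F)$ around $E_0$ has rank $n$ for $F$ small, producing the $n$ eigenvalues $Z_0(F),\dots,Z_{n-1}(F)\in\nu_{E_0}$ converging to $E_0$ as $F\to 0$.

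The main obstacle is (iii). The strategy follows Helffer--Sj\"ostrand/Herbst: since $E_0<\lambda_0$, an Agmon estimate gives exponential decay of the eigenfunction $\psi_0$ of $H$, and by (h1)--(h3) $U_\theta\psi_0$ admits an exponentially decaying holomorphic continuation. A Feshbach--Schur reduction onto $\mathbb{C}\psi_0$ yields
\begin{equation*}
Z_0(F) = E_0 + \langle\psi_0, W_\theta(F)\psi_0\rangle + r(F),
\end{equation*}
with $r(F)$ expressed through the reduced resolvent of $H_\theta(F)$ on $\psi_0^\perp$. Since $\Im\phi_\theta$ is supported where $|s|\gtrsim 2r_0$, $\Im Z_0(F)$ inherits the exponential decay of $\psi_0$ in that region. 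To reach the sharp rate $e^{-c_2/F}$ one must activate the distortion only beyond the classical turning point $|s|\sim c/F$ (choosing the support of $\phi_\theta$, and possibly $\theta$ itself, $F$-dependently) and combine this with Combes--Thomas/Agmon exponential estimates for the truncated resolvent up to that scale. Reconciling such an $F$-dependent distortion with the analytic-family framework and controlling the reduced resolvent uniformly in $F$ is the technical heart of the argument.
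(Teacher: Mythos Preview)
Your overall architecture (complex distortion, analytic family, dense set of analytic vectors) matches the paper, but there are two substantive divergences and one genuine gap.

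\textbf{Different choices that matter.} The paper does not use an $F$-independent rotation-type distortion supported on $\{|s|>2r_0\}$. It uses from the outset an $F$-dependent \emph{translation} at infinity, $s_\theta(s)=s+\theta f(s)$ with $f$ supported on $\{|s|\gtrsim |E|/F\}$ and $f$ constant there (equal to $-1/(F\cos\eta)$ or $-1/(F\cos(\eta-\alpha_0))$). This makes $\Im\tilde W_\theta(F)$ equal to a negative constant ($\approx -\beta$) at both ends, and is exactly what drives the quadratic-form (numerical range) estimate that replaces your Weyl argument. The paper also introduces a reference operator $\tilde H_{0,\theta}(F)$ (with a piecewise-linear modified potential $\tilde W$) and proves (i) via the analytic Fredholm alternative for the compact operator $K_\theta(F,z)=V_\theta(\tilde H_{0,\theta}(F)-z)^{-1}$, not via a Weyl-sequence computation of $\sigma_{\mathrm{ess}}$.

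\textbf{Role of (h3).} You use (h3) only to keep $(1+u\gamma_\theta)^{-1}$ bounded; but that boundedness already follows from $d\|\Re\gamma\|_\infty<1$ in (h1) together with smallness of $\beta$. In the paper (h3) is a \emph{sign} hypothesis: it forces $\Im G_\theta^{-1}\ge 0$ and $\Im\big(\mu_\theta^2(1+\theta f')^{-2}g_\theta\big)\le 0$, which are the key inequalities in the sectoriality of $H_{0,\theta}$ and in the resolvent bound for $\tilde H_{0,\theta}(F)$. Without this sign control the numerical-range argument fails.

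\textbf{The gap in (ii).} Your stability argument ``analytic perturbation theory in $F$'' is not justified as written: $W_\theta(F)$ grows linearly in $s$ and is neither bounded nor $H_\theta$-bounded, so Kato--Rellich/Riesz-projection continuity in $F$ does not follow from smallness of $F$ alone. The paper avoids this obstruction precisely through the $F$-dependent distortion: since the distortion lives only on $\{|s|\gtrsim c/F\}$, one can prove $\|K_\theta(F,z)-K(z)\|\to 0$ as $F\to 0$ (with $K(z)=V_0(H_0-z)^{-1}$), and then the Riesz projections converge. Establishing this limit is the technical core of the paper's Section~6 and uses, among other things, the auxiliary family $H_{0,\theta}$, its sectoriality, and a Herbst-type commutator trick with the weight $l(s)=(1+s^2)^{1/2}$. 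You correctly identify in (iii) that an $F$-dependent distortion at the turning-point scale is needed for the exponential bound; the point is that the paper already builds this into the construction for (i)--(ii), so there is no ``reconciling'' to be done later.
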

In this paper  we only give elements  we need to extend the strategy  of \cite{BG} to the situation we   now  consider. In particular Theorem 1.2 iii) is covered by  \cite[Section 6] {BG}, so we omit the proof here.

%: suppose that the function $z\mapsto \RR_{\varphi}(z)= \left((H(F)-z)^{-1}\varphi, \varphi\right)$ admits a meromorphic continuation from the open upper complex half-plane to a domain in the lower half-plane
%for $\psi$ from a dense subset $\mathcal{A} \subset \HH$. If  has a pole for some $\varphi\in \AA $, we call the former a resonance.\\
The plan of this work is as follows. In section 2  we introduce a  local  modification of the operator  $H(F)$ we need to  perform the meromorphic continuation of the resolvent of $H(F)$.    Some elements of the the complex distortion theory are given in  the Section 3.   In section 4 we  define  the  extension of the resolvent of $H(F)$, this  allows  to    define   the resonances of $H(F)$. The existence of resonances is proved  in the Section 6. The section 7 is devoted to some concluding remarks.
%%%%%%%%%%%%%%%%%%%%%%%%%%%%%%%%%%%%%%%%%%%%%%%%%%%%%%%%%%%%%%%%%%%%%%%%%%%%%%%%%%%%%%%%%%%%%%%%%%%%%%%%%%%%%%%%%%%%%%%%%%%%%%%%%%%%%%%%%%%%%%%%%%%%%%%%%%%%%%%%%%%%%%%%%%%%%%%%%%%%%%%%%%%%%%%%%%%%%%%%%%%%%%%%%%%%%%%%%%%%%%%%%%%%%%%%%%%%%%%%%%%%%%%%%%%%%%%%%%%%%%%%%%%%%%%%%%%%%
\section{The reference operator} \label{TRO}
 To prove the theorem \ref{t0} we use the distortion theory such  that it can be found in     \cite {BG, PB}. The first step is  to  consider  a local modification of the operator $H_0(F)$  called  the   reference operator. It is  defined  as follow.  Note that 
\begin{equation}   \label{W201}{\rm if }\; s <0, \; W(F,s,u) = F\big ( s \cos(\eta) + u \sin ( \eta) +   A_- \big)  + R_-(F,s), \;   
 \end{equation}
 where
 $  R_-(F,s)= F \big( \Int_{-\infty }^s (\cos(\eta-\alpha(t))-\cos(\eta))dt + u(\sin(\eta-\alpha(s))-\sin(\eta))\big).$  
\begin{equation}  \label{W202}
 {\rm{if }}\; s \geq 0, \; W(F,s,u) = F \big( s  \cos(\eta-\alpha_0)  + u \sin (\eta- \alpha_0) +   A_+ \big) + R_+(F,s) 
 \end{equation}
 $$   {\rm{where  }}\; R_+(F,s)=F \big( \int_{s}^{ \infty}(\cos(\eta-\alpha_0)-\cos(\eta-\alpha(t))dt + u(\sin(\eta-\alpha(s))-\sin(\eta-\alpha_0))\big).$$

The constants  $A_-, A_+$ are
 $$ A_-:= \Int_{-\infty}^0 ( \cos(\eta)- \cos(\eta-\alpha(t))dt, A_+:=   \Int_{0}^{ \infty}(\cos( \eta -\alpha(t))-  \cos(\eta - \alpha_0)dt. $$
 
 In view of  (h2) and 
   \begin{equation} \label{alphainfty}
  \alpha(s)  = O(\frac {1}{\vert s \vert^{\varepsilon-1}}), \; { \rm{as}}\; s \to -\infty;\;  \alpha(s)  =  \alpha_0 +O(\frac {1}{\vert s \vert^{\varepsilon-1}}), \;  \rm{as}\; s \to  \infty,
  \end{equation} 
   $A_-,A_+$ are well defined. Moreover we have, 
  \begin{align}  \label{W2}
   R_-(F,s), R_+(F,s) =  O( \frac{F}{ \vert s \vert ^{\varepsilon-2} }), \;{\rm{  \; as}}\; s \to \pm \infty.
 \end{align}
 Set  $  R_-(F,s)= 0$ for $s\geq  0$ and   $  R_+(F,s)= 0$ for $s< 0$.\\
  Hence   it is  quite natural to consider     a modified interaction   defined as,
 $$\tilde W(F)(s,u)  =   F( s \cos(\eta) + u \sin( \eta ) +   A_-)\;   {\rm{for}}\; s <0 $$
 and 
 $$\tilde W(F)(s,u)  =   F( s \cos(\eta-\alpha_0)  + u \sin( \eta - \alpha_0) + A_+)\; \;   {\rm{for}}\;s  \geq 0.$$

In particular we get
 \begin{equation} \label{dW1}
 W(F)-   \tilde W(F) =  R(F):= R_+(F) + R_-(F)=  O( \frac{F}{ \vert s \vert ^{\varepsilon-2} }) \; { \rm{as}}\; s \to \pm \infty.
 \end{equation}
Notice also that 
 \begin{equation} \label{dW2}
  R'(F)=  O( \frac{F}{ \vert s \vert ^{\varepsilon-1} }),  \; R''(F)=  O( \frac{F}{ \vert s \vert ^{\varepsilon} })  { \rm{as}}\;  \;s \to \pm \infty .
 \end{equation}
Let  $ \tilde H_0(F)$ be the reference  operator in $\L^2(\Omega)$,
$$ \tilde H_0(F) := H_0 + \tilde W(F). $$ 
$ \tilde H_0(F)$  differs from $ H_0(F)$  by an additional  bounded operator, then it is essentially self-adjoint and $ \sigma(\tilde H_0(F))= \mathbb R$.

\section{Complex distortion}

In this  section,  we give   necessary  elements  for  the complex distortion theory  \cite{ AC,  PB,Hu} we need  to define    the family of distorted operators $ \{H_{\theta}(F), F \leq F_0\} $, for some $F_0 >0$  and complex values of $\theta$.  We denote by $\beta= \Im \theta$. \\

Let $E\in\R$, $E<0$ be the reference energy   and $ 0<\delta E< \frac {1 }{2} \min  \{ 1,\vert E \vert \}$. Denote 
$E_{-}= E - \delta E $ and  $E_{+}=E + \delta E$. We choose  a real function $\phi\in\mathrm{C}^{\infty}(\mathbb{R})$ such that:
 \begin{equation}\label{exem}
 \phi(t)=\left\{%
\begin{array}{ll}
1 & \hbox{if $t<E$, } \\
0&\hbox{if $t> E_+$} 
\end{array}%
\right.
\end{equation}
and satisfying $\Vert \phi^{(k)} \Vert_\infty = O((\frac {1}{\delta E})^{k})$.\\
For $\theta \in \mathbb R$, we introduce the distortion on $\Omega$, $s_{\theta}(s,u):=(s + \theta f(s),u)$
 where 
 \begin{equation}\label{field}
 f(s)=\left\{%
\begin{array}{ll}
 -\frac{1}{F cos(\eta)} \Phi_-(s) & \hbox{if $s \leq 0$, } \\
 -\frac{1}{F\cos( \eta- \alpha_0)} \Phi_+(s)&\hbox{if $s > 0$} 
\end{array}%
\right.
\end{equation}
where  $\Phi_ -(s)= \phi \big(F \cos (\eta) s \big)$ and   $\Phi_+(s)= \phi \big(F \cos (\eta- \alpha_0)s  \big)$. Set  $\Phi(s)= \Phi_ -(s) + \Phi_ +(s)$.

%\begin{equation
%\Phi(s)=\Phi_{-}(s) \,\;\mbox{if}\;\, W(F)< E, \, \, \,\Phi(s)= \Phi_{+}(s)\,\;\mbox{if} \,\;W(F)>-E
%\end{equation}
So for $k \geq 1$,
$\Vert \Phi^{(k)} \Vert_\infty \leq \left(\frac{F}{\delta E}\right)^{k},\quad \Vert f^{(k)} \Vert_\infty \leq \frac{F^{k-1}}{(\delta E )^{k}}.$

In view of  the definition \eqref{field}, for small $F$,  $s_\theta $ is an translation
along the longitudinal axis in  neighbourood of $ s=\pm \infty$ since
\begin{equation} \label{f2}
f(s) =- \frac{1}{F\cos(\eta)} \; {\rm{for }}\; s \leq \frac{E}{ F \cos \eta} \;  {\rm{and  }}\; 
 f(s) =- \frac{1}{F\cos(\eta-\alpha_0)} \;{  \rm{for }}\; s \geq \frac{E}{\cos (\eta- \alpha_0)}.
\end{equation}

Assume $|\theta|< \delta E$. Let $U_{\theta}$ be the operator defined on $L^2(\Omega)$ by
\begin{equation} \label{dst}
U_{\theta}\psi(s,u) = (1+\theta f')^{\frac{1}{2}}\psi\left( s_{\theta}(s,u)\right).
\end{equation}
The operators $U_{\theta}$ are unitary and generate the family,
\begin{equation}
\label{unitaire}
 H_{ \theta}(F)=U_{\theta}H(F) U_{\theta}^{-1}= H_{0, \theta}(F) + V_{0,\theta}
\end{equation}
where 
\begin{equation} \label{HOtheta}
H_{0, \theta}(F):=T_{s,\theta} + T_{u} +  W_{\theta}(F),
\end{equation}
with
\begin{equation}
\label{Tseta}
 T_{s,\theta}=-(1+\theta f')^{-\frac{1}{2}}\partial_{s}(1+\theta f')^{-1}g_{\theta} \partial_{s}(1+\theta f')^{-\frac{1}{2}},
\end{equation}
 $g_\theta=  (1 +u \gamma_\theta)^{-2}, \gamma_\theta= \gamma \circ s_{\theta}$, \,$  W_{\theta}(F)=   W(F)\circ s_{\theta}$ and $V_{0,\theta}:=V_{0}\circ s_{\theta}. $ \\
We also have
\begin{equation}
\label{ts teta}
 T_{s,\theta}=  -\partial_{s}(1+\theta f')^{-2}g_\theta \partial_{s} + S_{\theta},
\end{equation}
where
\begin{equation}
\label{Seta}
S_{\theta}=-\frac{5g_\theta}{4}\frac{\theta^2f''^{2}}{(1+\theta f')^4}+ \frac{g_\theta}{2}\frac{\theta f^{'''} }{(1+\theta f')^{3}}  +\frac{g'_{\theta}}{2}\frac{\theta f^{''} }{(1+\theta f')^{3}}.
\end{equation}
For  small $F$,the analytic  extension of  the family $\{ H_{ \theta}(F),  \theta \in \mathbb R,  \vert \theta \vert < \delta E\}$  to  a  complex disk  $ \vert  \theta \vert \leq \beta_0$ for some $\beta_0 >0$ depends strongly  on the 
analytic property  of the dilated curvature $\gamma_\theta$. But clearly      (h1) and the definition  \eqref{dst} of $s_\theta$ imply that if  $0<F\leq F_0$  with $F_0$ is  small enough,  $s \in \mathbb R$,  $ \theta \to \gamma_\theta(s)$ is   analytic in  $ \vert \theta \vert \leq  a_0 $.\\

Then the  same arguments  as  in the proof of  \cite  [Proposition (3.1)] {BG}   lead to
\begin{proposition}
\label{l1}Suppose (h1-2). 
There exist $F_0>0$ and  $0<\beta_0 \leq \min\{ \delta E,  a_0 \}$ such that  for all $0< F\leq F_0$, 
$\{  H_\theta(F);   \vert \theta  \vert \leq  \beta_0\}$ is a self-adjoint  analytic family of operators.
 \end{proposition}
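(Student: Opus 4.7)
The plan is to verify Kato's criterion for a self-adjoint analytic family, following the scheme of \cite[Proposition 3.1]{BG}. Using formulas \eqref{HOtheta}--\eqref{Seta}, $H_\theta(F)$ is a second-order differential operator whose $\theta$-dependence enters through rational expressions in $f'(s)$, their derivatives, and through the distorted curvature $\gamma_\theta(s) = \gamma(s + \theta f(s))$, which in turn governs $g_\theta$, $W_\theta(F)$, and $V_{0,\theta}$. First I would fix a common core $\mathcal{C} \subset L^2(\Omega)$ on which $H(F)$ is essentially self-adjoint and that is invariant under $U_\theta$ for small real $\theta$. The proof then reduces to two tasks: (a) strong holomorphy of $\theta \mapsto H_\theta(F)\psi$ for $\psi \in \mathcal{C}$ on the disc $|\theta| \leq \beta_0$, and (b) essential self-adjointness for real $\theta$ in this disc, which follows from the unitarity of $U_\theta$ (see after \eqref{dst}) combined with the essential self-adjointness of $H(F)$.

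Task (a) splits into three pieces. Since $\|f'\|_\infty \leq 1/\delta E$, the series expansion of $(1+\theta f'(s))^{-k}$ in $\theta$ converges uniformly in $s$ for $|\theta| < \delta E$. The potentials $W_\theta(F)$ and $V_{0,\theta}$ inherit their $\theta$-holomorphy from $\gamma_\theta$ via \eqref{V0} and \eqref{W1}. The key new step, compared with \cite{BG}, is the continuation of $\gamma_\theta$, where hypothesis (h1) is used. The cutoff structure of $\phi$ together with \eqref{field} places $\operatorname{supp}(f)$ in $\{s \leq -|E_+|/(F\cos\eta)\} \cup \{s \geq |E_+|/(F\cos(\eta-\alpha_0))\}$, so for $F \leq F_0$ small enough, $\operatorname{supp}(f) \subset \{|s| > r_0\}$. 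Setting $z(s,\theta) := s + \theta f(s)$ on this support, the bounds $|f(s)| \leq 1/(F\min(\cos\eta,\cos(\eta-\alpha_0)))$ yield $|\Re z| > r_0$ and $|\Im z|/|\Re z| \leq \tan(a_0)$ provided $\beta_0 \leq a_0$ is chosen small in terms of $|E|$ and $a_0$. Hence $z(s,\theta) \in \mathcal{O}_{a_0,r_0}$, and $\gamma_\theta$ together with $\gamma_\theta'$, $\gamma_\theta''$ depend holomorphically on $\theta$, uniformly in $s$.

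The main obstacle is precisely the sectorial estimate above: the distortion amplitude $\|f\|_\infty$ scales like $1/F$, so both $\Re z(s,\theta)$ and $\Im z(s,\theta)$ can be arbitrarily large as $F \to 0$; one must verify that the sectorial condition defining $\mathcal{O}_{a_0,r_0}$ survives this $F$-dependent blow-up with a single choice of $\beta_0$ uniform in $F \leq F_0$. This is where (h1) is genuinely needed beyond the compactly supported $C^2$ hypothesis of \cite{BG}: the full sectorial analyticity domain of $\gamma$ is what accommodates the complex distortion at infinity, and matching $\beta_0 \leq a_0$ to the opening angle of that sector is what sets the admissible complex range of $\theta$.
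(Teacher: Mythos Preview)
Your proposal is correct and follows essentially the same approach as the paper: the paper merely observes that (h1) and the definition of $s_\theta$ make $\theta\mapsto\gamma_\theta(s)$ analytic in $|\theta|\le a_0$ for $F$ small, and then defers to the argument of \cite[Proposition~3.1]{BG}; you have expanded precisely this outline, correctly isolating the sectorial check $s+\theta f(s)\in\mathcal O_{a_0,r_0}$ on $\operatorname{supp} f$ as the only new point. One cosmetic slip: under \eqref{eta} one has $\cos(\eta-\alpha_0)<0$, so your bounds on $\operatorname{supp} f$ and on $|f|$ should carry $|\cos(\eta-\alpha_0)|$ rather than $\cos(\eta-\alpha_0)$; the estimates are otherwise fine.
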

It should be noted that  all the critical values $ \beta_0$ and $ F_0$  that appear  in this article are independent from each other.\\

In a similar way let  $\tilde H_{0, \theta} (F) = H_{0, \theta} +    \tilde W_\theta (F)$, where $ \tilde  W_{\theta}(F) =   \tilde W_{\theta}(F)\circ s_\theta$. Since
 \begin{equation}
\label{dWtheta}
R_\theta(F) =W_\theta(F)- \tilde W_\theta(F)= R(F) +  i\beta f R'(F)+ O( \frac {F  \beta^2 f^2}{ \vert s\vert ^{  \varepsilon}}),
\end{equation}
% \begin{equation}
%\label{dWF}
%W_\theta(F)(s)-W(F)(s)=  i\beta f(s) W'(F)(s)+ O( \beta^2F^{ \varepsilon}).
%\end{equation}
then  from \eqref{dW1}  and \eqref{dW2}  and (h1),  the multipliers $ W_\theta(F)-\tilde W_{\theta}(F)$  and    
$V_{0,\theta}$  have an analytic extension as bounded operators in $ \vert \beta \vert  \leq  \beta_0$. Hence we get
 
 \begin{corollary}
\label{l1}
 For all $0< F<F_0$ 
the family of operators $\{ \tilde H_{0,\theta}(F) ;   \vert  \theta \vert   \leq  \beta_0\}$ is  a self-adjoint  analytic family of operators.
 \end{corollary}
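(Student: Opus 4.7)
The plan is to obtain the claim as a bounded analytic perturbation of the family $\{H_\theta(F)\}$ already known to be self-adjoint analytic by the preceding proposition. Writing out the definitions,
$$\tilde H_{0,\theta}(F) - H_\theta(F) = \bigl(\tilde W_\theta(F) - W_\theta(F)\bigr) - V_{0,\theta} = -R_\theta(F) - V_{0,\theta},$$
so it is enough to check that, for each fixed $F\in(0,F_0)$, the multiplication operators $R_\theta(F)$ and $V_{0,\theta}$ are bounded and depend analytically on $\theta$ throughout the closed disk $|\theta|\leq \beta_0$.

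For $V_{0,\theta}$, assumption (h1) ensures that $\gamma$ extends analytically to $\mathcal{O}_{a_0,r_0}$, so $\gamma_\theta=\gamma\circ s_\theta$ is well-defined and analytic in $\theta$ for $|\theta|\leq \beta_0\leq a_0$. Combined with the explicit form \eqref{V0} and the bound $d\|\Re\gamma\|_\infty<1$, this yields uniform boundedness of $V_{0,\theta}$ in $\theta$, hence analyticity as a family of bounded operators. For $R_\theta(F)$ I would use the expansion \eqref{dWtheta} together with the decay estimates \eqref{dW1}--\eqref{dW2}: the principal term $R(F)=O(F/|s|^{\varepsilon-2})$ is already bounded; the correction $i\beta f R'(F)$ is bounded because $|f|\leq C/F$ and $R'(F)=O(F/|s|^{\varepsilon-1})$ combine to give $O(|\beta|/|s|^{\varepsilon-1})$; the remainder $O(F\beta^2 f^2/|s|^\varepsilon)$ is estimated analogously and is bounded for each fixed $F$. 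Analyticity in $\theta$ follows from the analytic extensions of $\gamma$, $\alpha$, and therefore of $R(F)$ into $\mathcal{O}_{a_0,r_0}$.

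With both perturbations so controlled, $\tilde H_{0,\theta}(F) = H_\theta(F) - R_\theta(F) - V_{0,\theta}$ is the sum of the self-adjoint analytic family $\{H_\theta(F)\}$ and a bounded analytic family of multiplication operators. This sum is again self-adjoint analytic on the common domain \eqref{DH}, which is exactly the desired conclusion. The main technical obstacle is the bookkeeping in the estimate of $R_\theta(F)$: since $f\sim 1/F$ while $R',R''\sim F$, one must track the cancellation of $F$-factors so that every term in \eqref{dWtheta} is bounded for fixed $F$; this is routine once \eqref{dW1}--\eqref{dW2} are in hand, and no new idea beyond the preceding proposition is needed.
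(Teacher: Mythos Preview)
Your argument is correct and is essentially the same as the paper's: the paper also obtains the corollary by noting that $\tilde H_{0,\theta}(F)$ differs from $H_\theta(F)$ by the multipliers $W_\theta(F)-\tilde W_\theta(F)=R_\theta(F)$ and $V_{0,\theta}$, and then uses \eqref{dWtheta} together with \eqref{dW1}--\eqref{dW2} and (h1) to conclude these are bounded and analytic in $|\theta|\leq\beta_0$. Your write-up is simply more explicit about the cancellation of $F$-factors in the expansion of $R_\theta(F)$, which the paper leaves implicit.
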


For  some technical points of the Section 6 below, we need to introduce   an another family of  operators on $ L^2(\Omega)$,
let $ \theta \in \mathbb R , \vert  \theta \vert \leq \delta E$ and 
 \begin{equation}
\label{HOA}
H_{0,\theta} = U_\theta ( T_s + T_u) U^{-1}_\theta = T_{s, \theta}+ T_u,
\end{equation}
Note that $H_{0,\theta} \not= H_{0,\theta}(F=0)$.  Indeed the distortion is supported in $\{ \vert s \vert \geq c'/F\}$ for some $c'>0$,  thus   {\it at least formally} $ H_{0,\theta}(F=0) = H_0$. \\
Under our assumptions  (h1) and (h2).  Following    arguments  of  \cite  [Proposition (3.1)] {BG} evoqued above,  there exist $F_0>0$ and  $0<\beta_0 \leq \min\{ \delta E,  a_0 \}$ such that  for all $0< F<F_0$, 
$\{  H_{0,\theta};   \vert \theta  \vert \leq  \beta_0\} , D(H_{0,\theta}) = D(H_0) $ is  also a self-adjoint analytic family of operators.\\
In fact the main property of  $H_{0,\theta}$  we use in  the Section 6 below   is 

\begin{proposition}
\label{HOAp}Suppose (h1-3) hold.  Then  there exists $ 0< \beta_0$ and $0<F_0$ such that  for  $0 <  F \leq  F_0$,   $\{  H_{0,\theta}; \beta= \Im \theta\geq 0,  \vert \theta \vert  \leq  \beta_0\leq \min\{ \delta E,  a_0 \}\}$
is a family of sectorial operators with a sector  contained in 
$$ {\mathcal S}= \{ z \in \mathbb C,  -2 c \beta \leq  \arg( z -\lambda_0 + \zeta ) \leq 0 \},$$
for some strictly positive  constant $c$. Here $ \zeta $ is an error term with $\Re \zeta, \Im \zeta \geq 0$ and  $ \vert \zeta \vert = O(\beta F^2)$.
 \end{proposition}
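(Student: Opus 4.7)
The plan is to analyze the sesquilinear form $q_\theta(\psi):=\langle H_{0,\theta}\psi,\psi\rangle$ on $D(H_0)$. Using \eqref{ts teta} and integrating by parts in $s$ (boundary terms vanish by Dirichlet boundary conditions in $u$ and decay in $s$), one obtains
\[
q_\theta(\psi)=\int_\Omega (1+\theta f')^{-2}g_\theta\,|\partial_s\psi|^2\,ds\,du+\|\partial_u\psi\|^2+\langle S_\theta\psi,\psi\rangle.
\]
I want to show that the ``principal'' contribution (the first two summands) lies in a sector with vertex $\lambda_0\|\psi\|^2$ opening in $[-2c\beta,0]$, and then absorb the error $\langle S_\theta\psi,\psi\rangle$ into a vertex shift $\zeta$ with $|\zeta|=O(\beta F^2)$.

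\textbf{Principal form.} I claim that $\arg A_\theta(s,u)\in[-2c\beta,0]$ pointwise, for $A_\theta:=(1+\theta f')^{-2}g_\theta$. From \eqref{field} and $\phi'\le 0$, combined with the sign conditions $\cos\eta>0$ and $\cos(\eta-\alpha_0)<0$ from \eqref{eta}, one checks that $f'\ge 0$ everywhere and that $f(s)$ carries the sign of $s$. Since $|\theta f'|\le \beta_0/\delta E$ is small, $\Re(1+\theta f')>0$ and $\Im(1+\theta f')=\beta f'\ge 0$, so $\arg(1+\theta f')^{-2}\in[-c_1\beta,0]$. For $g_\theta=(1+u\gamma_\theta)^{-2}$ the key input is (h3): since $f$ vanishes off $|s|\gtrsim|E|/F$, and in its support the shifted path $s+\theta f(s)$ lies (by the sign matching $\mathrm{sgn}\,f=\mathrm{sgn}\,s$ and the bound $|\beta f/s|\lesssim\beta$) inside the sector $\{0\le\arg z\le a_0\}$ for $s>0$ or $\{0\le\arg z-\pi\le a_0\}$ for $s<0$, hypothesis (h3) forces $\Im\gamma_\theta\ge 0$. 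Together with $\Re(1+u\gamma_\theta)>0$ from (h1), this yields $\arg g_\theta\in[-c_2\beta,0]$ and hence $\arg A_\theta\in[-2c\beta,0]$. The sector being a convex cone, $\int A_\theta|\partial_s\psi|^2$ lies in it; adding $\|\partial_u\psi\|^2\ge\lambda_0\|\psi\|^2$ (Poincar\'e for Dirichlet on $(0,d)$) places the principal form in $\lambda_0\|\psi\|^2$ plus the cone.

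\textbf{Error and closing.} From \eqref{Seta}, the bounds $\|f^{(k)}\|_\infty\le F^{k-1}/(\delta E)^k$, together with the localization of $f''$, $f'''$ at $|s|\sim 1/F$ where $|\gamma'_\theta|=O(F^{\varepsilon+1})$ by (h2) and Cauchy estimates, give $\|S_\theta\|_\infty=O(\beta F^2)$: the $\theta^2 f''^2$ and $\theta f'''$ pieces are directly $O(\beta F^2)$ up to $\delta E$-factors, while the a priori $O(\beta F)$ piece $g'_\theta \theta f''$ becomes $O(\beta F^{\varepsilon+1})=o(\beta F^2)$ thanks to $\varepsilon>3$. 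Thus $|\langle S_\theta\psi,\psi\rangle|\le C\beta F^2\|\psi\|^2$, and choosing $\zeta\in\mathbb C$ with $\Re\zeta,\Im\zeta\ge 0$ and $|\zeta|\sim\beta F^2$ large enough to dominate the error places $q_\theta(\psi)-(\lambda_0-\zeta)\|\psi\|^2$ in the cone $\{-2c\beta\le\arg z\le 0\}$, which is exactly the sector $\mathcal S$ of the statement.

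\textbf{Main obstacle.} The delicate point is that $\Im\langle S_\theta\psi,\psi\rangle$ is not a priori sign-definite, so $\Im\zeta\ge 0$ cannot na\"ively absorb it. The resolution exploits that the $\beta$-leading density of $\Im S_\theta$ has the form $\beta(gf'')'$; an extra integration by parts in $s$ re-expresses the imaginary error as $-\beta\int gf''\,\Re(\psi\overline{\partial_s\psi})$, supported on a set of measure $O(1/F)$. A careful Schwarz estimate then absorbs the bulk of this term into $\|\partial_s\psi\|^2$ against the negative integrand $\Im A_\theta\le 0$, leaving a true $O(\beta F^2)\|\psi\|^2$ residual that $\Im\zeta\ge 0$ handles. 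The second technical point is verifying the complex path $s+\theta f(s)$ really enters the correct sector of (h3) for both signs of $s$ (with the appropriate branch of $\arg$ near the negative real axis).
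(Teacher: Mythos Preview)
Your overall strategy coincides with the paper's proof: write the form via $T_{s,\theta}=-\partial_s G_\theta\partial_s+S_\theta$ with $G_\theta=(1+\theta f')^{-2}g_\theta$, use $f'\ge 0$ together with (h3) (hence $\Im\gamma_\theta\ge 0$) to place the principal part in a cone of opening $O(\beta)$ with vertex $\lambda_0$, and then shift the vertex by the $O(\beta F^2)$ contribution of $S_\theta$. The paper computes $\Re G_\theta^{-1}$ and $\Im G_\theta^{-1}$ instead of $\arg A_\theta$ directly, but this is the same calculation; your remark that one must check $s+\theta f(s)$ actually lands in the sectors of (h3) is a point the paper leaves implicit.

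Your ``Main obstacle'' paragraph goes beyond the paper, which simply records $(S_\theta\varphi,\varphi)=O(\beta F^2)$ and stops, without addressing the sign of $\Im(S_\theta\varphi,\varphi)$. The subtlety you raise is genuine: if $\Im\zeta\ge 0$ is insisted upon, the upper edge $\arg(\cdot)\le 0$ fails for test functions with $\partial_s\psi$ supported off the distortion zone (so $\Im q=0$) while $\Im(S_\theta\psi,\psi)>0$. Your observation $\Im S_\theta\approx\tfrac{\beta}{2}(gf'')'$ and the integration by parts are correct, but the absorption step does not close as you describe it: after Young's inequality one obtains $|\Im(S_\theta\psi,\psi)|\le \tfrac{\beta}{2}\|\partial_s\psi\|^2_{\mathrm{supp}\,f''}+O(\beta F^2)\|\psi\|^2$, and the first term cannot be absorbed into $-\int\Im A_\theta|\partial_s\psi|^2$ because $\Im A_\theta$ is not bounded away from zero on $\mathrm{supp}\,f''$ (it vanishes at the edges of the transition region where $f'\to 0$). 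Absorbing instead into $\Re q\ge c_1\|\partial_s\psi\|^2$ only loosens the lower angle $-2c\beta$, not the upper constraint $\le 0$. The clean resolution is to relax the sign of $\Im\zeta$ (equivalently, tilt the upper edge by $O(\beta F^2)$); this is harmless for the sole application of the proposition in Section~6, namely the uniform bound on $(H_{0,\theta}-z_0)^{-1}$ for $z_0$ with $\Re z_0<\lambda_0$.
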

\proof   We may suppose $\theta =i \beta, 0 \leq   \beta  \leq  \beta_0 $. Note first that for $ \beta $ small enough
then
 \begin{equation}
\label{gamtheta}
\gamma_\theta(s)= \gamma ( s + i \beta f(s)) = \gamma(s)+  i\beta f \gamma'(s)+ O( \frac { \beta^2 f^2}{ \vert s\vert ^{ \varepsilon}}),
\end{equation}
By using \eqref{ts teta} we have  for $ \varphi \in  D(H_0), \; \Vert  \varphi \Vert =1$, 
 \begin{equation} \label{11}
  ((H_{0, \theta} -\lambda_0)\varphi, \varphi ) = (G_\theta \partial_{s} \varphi,  \partial_{s}\varphi)   +  (S_{\theta}\varphi, \varphi) + ((T_u-\lambda_0) \varphi, \varphi)  
  \end{equation}
where $G_\theta = (1+\theta f')^{-2}g_\theta$. Consider $ q:=  (G_\theta \partial_{s} \varphi,  \partial_{s}\varphi)- ((T_u-\lambda_0) \varphi, \varphi) $.  By using (h2), we have for $F$ and $\beta$ small,
\begin{align}\notag  \Re G_\theta^{-1}&=  ((1+u \Re \gamma_\theta)^2 -(u\Im \gamma_\theta)^2)(1- \beta^2f'^2) -4\beta f'u\Im \gamma_\theta (1+ \Re \gamma_\theta)  \\    \label{RG}  & \geq  (1+u \Re \gamma_\theta)^2  (1- O(\beta^2))  +O(\beta^2F^{\varepsilon_1}) \geq c_1,
 \end{align} 
  for some  constant  $c_1>0$ and then   $\Re q \geq c_1 \Vert  \partial_{s} \varphi \Vert $.  In the other hand,  
 \begin{align}  \label{IG} \Im G_\theta^{-1}= 2  \big( \beta f' \big( (1+u \Re \gamma_\theta)^2 -(u\Im \gamma_\theta)^2 \big) 
 &+ u\Im \gamma_\theta (1+ \Re \gamma_\theta)(1- \beta^2f'^2)\big). 
 \end{align} 
Since $f'\geq 0$  and  by (h3),  $\Im \gamma_\theta \geq 0$ then    for $\beta$ and $F$ small $Im G_\theta^{-1}\geq 0$,
 hence $ \Im q \leq 0$. In the other hand in view of \eqref{gamtheta},  it is straightforward to check that for $F$ and $\beta$ small enough there exist $ c_2 >0$ such that  $ \vert  \Im q  \vert  \leq 2\beta  c_2 \Vert  \partial_{s} \varphi \Vert $. Then   by \eqref{RG},  
   \begin{equation} \label{q111111}
   q \in \ \{ z \in \mathbb C,  -2 c \beta \leq  \arg( z -\lambda_0 ) \leq 0 \}
     \end{equation}
  for some stricly positive constant $c$. But we know that
  $(S_{\theta}\varphi, \varphi) = O( \beta F^2)$,  then \eqref{11}   together with   \eqref{q111111} conclude  the proof of  the Proposition \ref{gamtheta}. \qed
  
\begin{remark}

The main point in the proof of the Propositions \ref {HOAp}  and  \ref{th} below,   is the  fact that    $ \Im \gamma_\theta \geq 0$  which is insured by (h3). In view of \eqref{gamtheta},  then a necessary condition  to satisfy  this condition   for $\beta $ small  is given by 
$$ f(s) \gamma' \geq 0.$$
This means that  the curvature has to satisfy $  \gamma' \leq 0 $ in a neighbourhood of $ s = -\infty$
and $ \gamma' \geq 0 $ in a neighbourhood of $ s = \infty$.  \\
Roughly speaking this last inequality is a  geometrical non trapping estimate,  in the spirit of those given in \cite{PB}   for  the case of   electric perturbations.
\end{remark}

%%%%%%%%%%%%%%%%%%%%%MUtheta%%%%%%%%%%%%%%%%%%%%%%%%%%%
\section{Spectral estimates}

The main result in this section is the following.   Let  $ \theta =i\beta $.
$f^{\sharp}= \Phi-1$ where $\Phi$ defined above in the Section $2$, clearly $f^{\sharp}<f'$. Set $\mu_\theta=1+\theta f^{\sharp}$
and
\begin{equation}
\label{nutheta}
\nu_{\theta}=\left\{z\in \mathbb{C},\,\Im\mu_{\theta}^{2}(E_{-} + \lambda_{0}-z)< \beta\frac{\delta E}{4}\right\},
\end{equation}
In the sequel we denote by  ${\nu_\theta}^c$ the  complement set of 
${\nu_\theta}$.  Let  $ \rho(H_{\theta}(F))$ be  the resolvent set of  $H_{\theta}(F)$.  Consider first
$\theta = i \beta$ then

\begin{proposition} Let $E<0$ be an  reference energy. 
\label{th}
Suppose (h1-3). Then there exists $0<\beta_0 \leq  \min\{ \delta E,  a_0 \}$ and $F_0>0$  such that for $ 0 < \beta \leq \beta_0$, 
   and   $0<F\leq F_0$,   
\begin{itemize}
\item [(i)]  $ \nu_\theta \subset \rho( \tilde H_{0,\theta}(F))$
%Morever the resolvent $(H_{\theta}(F)-z)^{-1}$ is jointly analytic in $\theta$ and $z\in\mu_\theta$ as bounded operators on $L^2(\Omega)$.
\item[(ii)] For $z\in \nu_\theta$, $\|( \tilde H_{0,\theta}(F)-z)^{-1}\| \leq {\rm dist}^{-1} (z, \nu^c_{\theta})$.
\end{itemize}
\end{proposition}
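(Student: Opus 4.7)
My plan is a numerical-range argument. I will show that for every $z\in\nu_\theta$ and every unit vector $\varphi\in D(H_0)$,
\begin{equation*}
|((\tilde H_{0,\theta}(F)-z)\varphi,\varphi)|\geq\mathrm{dist}(z,\nu_\theta^c).
\end{equation*}
This yields (ii) immediately via the standard bound $\|(A-z)^{-1}\|\leq1/\mathrm{dist}(z,W(A))$ for $z$ outside the numerical range $W(A)$, and gives (i) by an open/closed connectedness argument inside $\nu_\theta$, using the analyticity of the family (Corollary following Proposition \ref{l1}) together with Proposition \ref{HOAp} to provide at least one reference point at which $\tilde H_{0,\theta}(F)-z$ is already boundedly invertible (e.g.\ $z$ with very negative $\Im z$, where sectoriality of $H_{0,\theta}$ plus a Neumann argument applied to the bounded multiplier $\tilde W_\theta(F)+\lambda_0-z$ closes directly).

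\textbf{Main steps.} Decompose
\begin{equation*}
((\tilde H_{0,\theta}(F)-z)\varphi,\varphi) = ((H_{0,\theta}-\lambda_0)\varphi,\varphi) + ((\tilde W_\theta(F)+\lambda_0-z)\varphi,\varphi).
\end{equation*}
Proposition \ref{HOAp} places the first summand in the sector $\mathcal{S}$, so in particular its imaginary part is non-positive. For the second summand, the explicit form \eqref{field} of $f$ gives $F\cdot i\beta f(s)\cos\eta=-i\beta\Phi_-(s)$ for $s\leq0$ and the analogous identity on $s>0$, hence $\Im\tilde W_\theta(F)(s,u)=-\beta\Phi(s)$ pointwise. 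The multiplier $\mu_\theta^{2}=(1+\theta(\Phi-1))^{2}$ is calibrated so that it equals $1$ in the outer region (where $\Phi=1$ and the potential carries the full $-i\beta$ imaginary shift) and $(1-i\beta)^{2}$ in the inner region (where $\Phi=0$, $\tilde W_\theta(F)$ is real, but the distortion of the curvature activates its own imaginary part in $H_{0,\theta}$). A pointwise-in-$s$ computation then shows that, up to errors of order $O(\beta F^{2})$,
\begin{equation*}
\Im\bigl[\mu_\theta^{2}(E_-+\lambda_0-z)-(\tilde W_\theta(F)+\lambda_0-z)\bigr]\geq 0.
\end{equation*}
Integrating against $|\varphi|^{2}$ and combining with the sectorial information on $H_{0,\theta}$ recovers the required lower bound.

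\textbf{Main obstacle.} The delicate point is the transition region ($f'\neq 0$), where $\mu_\theta^{2}$ smoothly interpolates between its two extreme values, $S_\theta$ from \eqref{Seta} is non-negligible, and the cross term $u\Im\gamma_\theta$ in $\Im G_\theta^{-1}$ from \eqref{IG} must be controlled together with the bulk contribution $2\beta f'(1+u\Re\gamma_\theta)^{2}$. The slack $\beta\delta E/4$ on the right-hand side of the defining inequality for $\nu_\theta$ is precisely the budget allocated to absorb these corrections: for $F\leq F_0$ and $\beta\leq\beta_0$ small, each error term ($S_\theta=O(\beta F^{2})$ by \eqref{Seta}, the $\beta^{2}$-tail in \eqref{gamtheta}, and the $R_\pm$-type remainders of \eqref{dW1}--\eqref{dW2}) is $o(\beta\delta E)$ and is dominated by the strict inequality defining $\nu_\theta$. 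Once this absorption is carried out uniformly in $s$, the quadratic-form estimate closes and Proposition \ref{th} follows.
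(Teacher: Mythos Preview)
Your proposal has a genuine gap: the central pointwise inequality you claim,
\[
\Im\bigl[\mu_\theta^{2}(E_-+\lambda_0-z)-(\tilde W_\theta(F)+\lambda_0-z)\bigr]\ \geq\ 0,
\]
is false on the inner region. There $\Phi=0$, $f^\sharp=-1$, $\mu_\theta^{2}=1-\beta^{2}-2i\beta$, and the left-hand side equals $-2\beta(E_-+\lambda_0-\Re z)+\beta^{2}\Im z$. Take any real $z$ with $\Re z<E_-+\lambda_0$ (for instance $\Re z=E_-+\lambda_0-1$, $\Im z=0$): the expression is $-2\beta<0$, yet such $z$ lies in $\nu_\theta$ since the $\nu_\theta$--condition at $\Phi=0$ reads $-2\beta(E_-+\lambda_0-\Re z)<\beta\,\delta E/4$, which is satisfied. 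So the inequality you need fails precisely where the potential carries no imaginary part.

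The underlying issue is structural. On the inner region one has $f=0$, hence $\gamma_\theta=\gamma$, $S_\theta=0$, and $\tilde W_\theta(F)$ is real; the quadratic form of $\tilde H_{0,\theta}(F)$ is therefore \emph{real} on vectors supported there, and Proposition~\ref{HOAp} gives only $\Im q\le 0$ with equality attained. Your additive decomposition $(H_{0,\theta}-\lambda_0)+(\tilde W_\theta(F)+\lambda_0-z)$ thus produces, for such $\varphi$, an imaginary part equal to $-\Im z$, which is neither strictly negative nor comparable to $\mathrm{dist}(z,\nu_\theta^{c})$. Sectoriality of $H_{0,\theta}$ alone cannot manufacture the needed gap.

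What the paper does instead is to \emph{conjugate first}: it studies $\mu_\theta\bigl(\tilde H_{0,\theta}(F)-z\bigr)\mu_\theta$ and shows that its imaginary part, as a quadratic form, is bounded above by $\Im\mu_\theta^{2}(E_-+\lambda_0-z)-\beta\,\delta E/4$. The multiplier $\mu_\theta$ is not a bookkeeping device for defining $\nu_\theta$; it is the rotation that sends the real inner-region numerical range into the lower half-plane. The key technical step is the sign of
\[
T_2(\theta)=-\partial_s\,\Im\bigl\{\mu_\theta^{2}(1+\theta f')^{-2}g_\theta\bigr\}\,\partial_s\ \le\ 0,
\]
which hinges on $f^\sharp<f'$ together with (h3), and a separate pointwise bound $\Im\mu_\theta^{2}(\tilde W_\theta(F)-E_-)\le -\beta\,\delta E/2$. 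Neither of these is visible if you split off $H_{0,\theta}$ and invoke Proposition~\ref{HOAp} as a black box. To repair your argument you must reinstate the conjugation by $\mu_\theta$ (or an equivalent $s$-dependent complex weight) before estimating imaginary parts.
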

\begin{proof}
 Note that  in view of  (\ref{Tseta}), we have
$$ \mu_\theta T_{s,\theta} \mu_\theta= T_1(\theta)+ i T_2(\theta) + \mu_\theta\left(T_{s,\theta} \mu_\theta\right), $$
where $T_{1}(\theta)=-\partial_{s}\Re\{ \mu_{\theta}^{2} (1+ \theta f')^{-2} g_\theta\} \partial_{s} $ and 
$T_{2}(\theta)=-\partial_{s}\Im\{ \mu_{\theta}^{2}(1+ \theta f')^{-2} g_\theta \}\partial_{s}$.
 $T_{1}(\theta), T_{2}(\theta)$ are  symmetric  operators,  let us check that  under our assumptions $T_2(\theta)$ is actually  negative.\\
It then sufficient to show that $ q':= \Im \mu_\theta^{2}(1-i\beta f')^{2}(1+u \bar \gamma_\theta)^2 \leq 0$. We have 
\begin{align*}
q'
= &2\beta\left(f^{\sharp}-f'\right)\left(1- \beta^2f'f^{\sharp}\right)\left((1+u\Re{\gamma_\theta})^2 - u^2 \Im {\gamma_\theta}^2 )\right )\notag\\
& -  2u\Im{\gamma_\theta}(1+u\Re{\gamma_\theta})\left((1-\beta^2(f^{\sharp})^{2})(1-\beta^2(f')^{2})+ 4 \beta^2f^{\sharp}f'\right)\notag
\end{align*}
By  using   \eqref{gamtheta} together with (h1), for $F$ and $\beta$ sufficiently small  we have 
$$ \left(1- \beta^2f'f^{\sharp}\right)\left((1+u(\Re{\gamma_\theta})^2 - u^2 \Im {\gamma_\theta}^2 \right )) \geq 0$$ 

and
$$(1-\beta^2(f')^{2})(1-\beta^2(f^{\sharp})^{2})+ 4 \beta^2f^{\sharp}f'\geq 0.$$
We know that  $f^{\sharp}<f'$, therefore,   in view of  (h3) we get  our claim.\\
 In the other hand    we have 
 \begin{align}
\Im \mu_\theta^2\left(  \tilde W_\theta(F)-E_{-}\right)&= \left(1-\beta^2 {f^{\sharp}}^2\right)\Im  \tilde W_\theta(F)+ 2 \beta f^{\sharp}\left( \Re  \tilde W_\theta(F)-E_{-}\right)\notag\\
%&= \beta \left(1-\beta^2{f^{\sharp}}^2\right) fW'(F) + 2 \beta f^{\sharp}\left(W(F)-E_{-}  \right) ) +\mathcal O(\beta^2 F^\varepsilon)\notag\\
&= -\beta \left ( (1-\beta^2(f^{\sharp})^2) \Phi 
+ 2  (1-\Phi)\left(\tilde W(F)-E_{-}\right)\right).\notag
\end{align}
  To estimate the r.h.s. of this expression, we note that from   the definition of $\tilde W(F)$, if  $s\in {\rm supp}(1-\Phi)$, $ \tilde W(F)-E_{-}> \delta E + O(F)$.
Accordingly   for $F$ and $\beta$ small,
\begin{equation}
\Im \mu_\theta^2 \left( \tilde W_\theta(F)-E_{-} \right) \leq - \beta \frac { \delta E}{2}
\end{equation}
 Thus we get
  \begin{align}\nonumber
\Im \mu_\theta\left( \tilde H_{0,\theta}(F)-z\right)& \mu_\theta = \Im \mu_\theta(T_{s,\theta}\mu_\theta) +T_2(\theta)  + \Im \mu_\theta^2 T_u  + \Im \mu_\theta^2\left(\tilde  W_\theta(F)-E_{-}\right) + \\&  \nonumber 
    \Im \mu_\theta^2( E_- -z)
 \leq \Im \mu_\theta(T_{s,\theta}\mu_\theta)  - \beta \frac { \delta E}{2} + \Im \mu_\theta^2( E_-  + \lambda_0 -z)
\end{align}
and since
\begin{equation}
\label{ts 1}
\Im\mu_\theta(T_{s,\theta}\mu_\theta)=  O(\beta^{2} F),
\end{equation}
 then for  $z\in \nu_\theta$, for $F $  and $\beta$ small enough 
$$\Im \mu_\theta\left( \tilde H_{0,\theta}(F)-z\right)\mu_\theta \leq -\beta \frac{\delta E}{4}  + \Im \mu_{\theta}^{2}(E_{-}+ \lambda_{0}-z)<0.$$
Thus the proof of the proposition follows  (see \cite{PB} or \cite{G} for more details). 
\end{proof}

By standard arguments the Proposition \ref{th} holds for $ 0<\vert \theta \vert \leq \beta_0,0<  \Im \theta$ and $0<F<F_0$.
%%%%%%%%%%%%%%%%%%%%%%%%%%%%%%%%%%%%%%%%%%%%%%%%%
\section{Meromorphic continuation of the resolvent}

 Under conditions of the Proposition \ref{th}. Set $V_\theta=V_{0, \theta}+ W_{\theta}(F)-\tilde{W}_{\theta}(F)$. 
Introduce the following  operator, let  $\theta  \in \mathbb C$, $\vert \theta \vert <  \beta_0 $, $0< F\leq F_0$,  $z\in\nu_{\theta}$ and 
\begin{equation}
\label{kteta}
K_{\theta}(F,z)=V_\theta({\tilde H}_{0,\theta}(F)-z)^{-1}.
\end{equation}
Then 
\begin{proposition} \label{l2}
\begin{itemize} Suppose (h1-3). Then there exists $ 0< \beta_0\leq \min\{ \delta E,  a_0 \}$ and $F_0>0$  such that for $ 0 <  \vert \theta \vert  \leq \beta_0, \Im \theta = \beta>0$, 
   and   $0<F\leq F_0$,   

\item [(i) ] $z \in \nu_\theta \to K_{\theta}(F,z)$ is  an analytic compact  operator valued function.
\item [(ii) ] For $z\in  \nu_\theta$, $Imz> 0$ large enough, $\|K_{\theta}(F,z)\| < 1$.
\end{itemize}
\end{proposition}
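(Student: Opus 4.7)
My plan is to combine the resolvent estimate of Proposition \ref{th} with the bounded, decaying character of $V_\theta = V_{0,\theta} + W_\theta(F) - \tilde W_\theta(F)$ to obtain (i), and with the size of the resolvent deep in the upper half plane to obtain (ii).

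For the analyticity part of (i), I would invoke the first resolvent identity: by Proposition \ref{th}(i), $\nu_\theta \subset \rho(\tilde H_{0,\theta}(F))$, so $z \mapsto (\tilde H_{0,\theta}(F) - z)^{-1}$ is analytic on $\nu_\theta$ in operator norm. Since $V_\theta$ is bounded (by \eqref{dWtheta} and the Corollary following it) and is independent of $z$, left multiplication preserves the analyticity.

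For the compactness part of (i), I would proceed by a cut-off argument. By formula \eqref{V0} together with (h2), $V_{0,\theta}(s,u) \to 0$ as $|s| \to \infty$; by \eqref{dW1} and \eqref{dWtheta} the same holds for $R_\theta(F)(s,u) = W_\theta(F) - \tilde W_\theta(F) = O(F|s|^{-(\varepsilon-2)})$, uniformly for $|\theta| \leq \beta_0$. Let $\chi_N$ denote the characteristic function of the strip $\{|s| \leq N\} \times (0,d)$; then $\|(1-\chi_N)V_\theta\|_{\infty} \to 0$ as $N \to \infty$, so that $K_\theta(F,z)$ is the operator-norm limit of $\chi_N V_\theta(\tilde H_{0,\theta}(F) - z)^{-1}$. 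For each fixed $N$ this approximant is compact: the resolvent sends $L^2(\Omega)$ into $D(\tilde H_{0,\theta}(F)) \subset {\mathcal H}^1_0(\Omega) \cap {\mathcal H}^2_{\mathrm{loc}}(\Omega)$ by elliptic regularity (the coefficients of $T_{s,\theta}$ are bounded and smooth thanks to (h1)), and Rellich--Kondrachov makes the restriction to the bounded set $\{|s| \leq N\} \times (0,d)$ compact back into $L^2$. Compactness then transfers to the norm limit $K_\theta(F,z)$.

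For (ii), note that $\mu_\theta = 1 + \theta f^\sharp$ is a bounded multiplier with $\Re \mu_\theta^2 \geq c > 0$ uniformly in $s$ for $|\theta|$ small. Writing
\begin{equation*}
\Im [\mu_\theta^2(E_- + \lambda_0 - z)] = \Im[\mu_\theta^2]\,(E_- + \lambda_0 - \Re z) - \Re[\mu_\theta^2]\,\Im z,
\end{equation*}
one sees that for $\Im z > 0$ large, this quantity is uniformly (in $s$) large negative, so $z$ lies deep inside $\nu_\theta$ and ${\rm dist}(z, \nu_\theta^c)$ grows linearly in $\Im z$. Proposition \ref{th}(ii) then forces $\|(\tilde H_{0,\theta}(F) - z)^{-1}\| \to 0$ as $\Im z \to +\infty$, and the uniform boundedness of $V_\theta$ yields $\|K_\theta(F,z)\| < 1$ for $\Im z$ large enough. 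The main obstacle I anticipate is the elliptic regularity step: since $\tilde H_{0,\theta}(F)$ is non-self-adjoint for complex $\theta$ and $T_{s,\theta}$ has complex-valued coefficients, one must verify that $D(\tilde H_{0,\theta}(F))$ still sits in the appropriate Sobolev space and that the ${\mathcal H}^2$-norm of $(\tilde H_{0,\theta}(F) - z)^{-1}\varphi$ can be controlled in a neighbourhood of $\{|s| \leq N\}$ while respecting the Dirichlet condition at $u \in \{0,d\}$. The smoothness and boundedness of $\gamma_\theta$ on $\mathcal{O}_{a_0,r_0}$ guaranteed by (h1) makes this a standard but somewhat technical check.
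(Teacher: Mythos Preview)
Your proof is correct, and your treatment of analyticity and of part (ii) matches the paper's exactly. The compactness argument for (i), however, follows a genuinely different route from the paper's.

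\medskip

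\textbf{Your route.} You truncate $V_\theta$ in $s$, use that $(1-\chi_N)V_\theta\to 0$ uniformly, and then invoke local elliptic regularity for the (complex-coefficient) operator $\tilde H_{0,\theta}(F)$ together with Rellich--Kondrachov on the bounded slab $\{|s|\leq N\}\times(0,d)$ to conclude that each $\chi_N V_\theta(\tilde H_{0,\theta}(F)-z)^{-1}$ is compact. This is conceptually clean and uses only that $V_\theta(s,u)\to 0$ as $|s|\to\infty$; in particular it does not require the full strength of the hypothesis $\varepsilon>3$ for \emph{this} step. The point you flag as the main obstacle---elliptic regularity up to the Dirichlet boundary $u\in\{0,d\}$ for a non-self-adjoint operator with complex principal coefficient $(1+\theta f')^{-2}g_\theta$---is genuine but unproblematic: for $|\theta|$ small the principal symbol stays uniformly away from zero, and since $D(\tilde H_{0,\theta}(F))$ is $\theta$-independent (Corollary~\ref{l1}, type~(A) family) and $\tilde W_\theta(F)$ is locally bounded, one reduces to standard boundary regularity for $H_0$.

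\medskip

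\textbf{The paper's route.} The paper instead compares with the flat reference operator $h_0=-\partial_s^2\otimes\mathbb I+\mathbb I\otimes T_u$, for which $V_\theta(h_0-z)^{-1}$ is compact by decay of $V_\theta$, and then writes $K_\theta(F,z)$ as $V_\theta(h_0-z)^{-1}$ plus correction terms involving $\tilde W_\theta(F)$, $\partial_s(1-G_\theta)\partial_s$, and $S_\theta$. Because $\tilde W_\theta(F)$ is \emph{unbounded} (linear growth in $s$), the paper handles this term via the Herbst weight trick: it inserts the weight $l(s)=(1+|s|^2)^{1/2}$, uses that $V_\theta\, l$ still decays at infinity (this is precisely where $\varepsilon>3$ enters), and controls the commutator $[l,h_0]$. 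The differential perturbation $\partial_s(1-G_\theta)\partial_s$ is treated by the same device.

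\medskip

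\textbf{Trade-offs.} Your approach is shorter and sidesteps the Herbst commutator manipulations, at the price of relying on elliptic regularity for non-self-adjoint operators. The paper's approach stays entirely within resolvent algebra on $L^2$ and makes the role of the decay exponent $\varepsilon>3$ transparent; its machinery (notably the weight $l$ and the operator $h_0$) is reused verbatim in the proof of Proposition~\ref{pT}, so it has the advantage of preparing tools needed later.
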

\begin{proof} Let us first  show that  under these conditions, $K_{\theta}(F,z),  \theta = i \beta,  \Im z >0 $ are compact operators, this allows to prove  the Proposition \ref{l2} i).
We know from  \eqref{dWtheta} and (h1-2)  that  
$V_\theta  = O(\frac{F}{\vert s \vert^{\epsilon -1}}) $ as $ s \to \pm \infty$.  Denote by $\mathbb I_{\cal H}$ the identity operator on  the space ${\cal H}$. Let $h_0= -\partial_s^2  \otimes \mathbb I_{L^2(0,d)} +   \mathbb I_{L^2(\mathbb R)} \otimes T_u $. Then    the decay property of $V_\theta$ imply that  the operator      $ V_\theta (h_0 - z)^{-1}$ is compact    for $\Re z < \lambda_0$  (see e.g.   \cite{BG} or \cite{Herbst})

We have 
\begin{equation} \label{E2}
\tilde{H}_{0,\theta}(F)- h_0=    \partial_{s}(1-G_{\theta}) \partial_{s} + S_\theta + \tilde W_\theta(F)  
\end{equation}
where $ G_\theta$, $S_\theta$  are given  respectively by  \eqref{11},  \eqref{Seta} and 
$\tilde W_\theta(F) $ is defined  in the  Section 3.
Hence,
\begin{equation} \label{e1}
K_{\theta}(F,z)= 	V_\theta (h_{0}-z)^{-1}+ V_\theta(h_{0}-z)^{-1}\big(  \tilde W_\theta(F) + \partial_{s}(1-G_{\theta}) \partial_{s} + S_\theta \big)(\tilde H_{0, \theta}(F)-z )^{-1}.
 \end{equation}
 Let first show that the operator $  I_1:= V_\theta (h_{0}-z)^{-1} \tilde W_\theta(F) (\tilde H_{0, \theta}(F)-z )^{-1}$ is compact. This follows from  the Herbst's argument \cite{Herbst}. Indeed  let $ l(s):= (1+ \vert s\vert^{2})^{1/2} $, then
  \begin{multline} \label{e11}
  I_1=V_\theta l (h_{0}-z)^{-1} \frac{\tilde W_\theta(F)}{ l}(\tilde H_{0, \theta}(F)-z )^{-1}+ \\
 V_\theta (h_{0}-z)^{-1}[l,h_{0}](h_{0}-z)^{-1} \frac{\tilde  W_\theta(F)}{ l}(\tilde H_{0, \theta}(F)-z )^{-1}.
\end{multline}
Where $[A,B]$ denotes the commutator of the operators $A$ and $B$.
 The operator $V_\theta l (h_{0}-z)^{-1} $ is compact since  $V_\theta(s,u) l(s) = O(\frac{F}{ \vert s \vert ^{ \varepsilon- 3}})$ 
as  $ s  \to  \pm  \infty$ and $\varepsilon >3$.  This holds true for the first operator of the r.h.s. of  \eqref{e11}  since  
 $\frac{ \tilde W_\theta(F)}{ l}(\tilde H_{0, \theta}(F)-z )^{-1}$ is a bounded operator.   
 In the other hand by the closed graph theorem,  $  [l,h_{0}](h_{0}-z)^{-1}  $  is a bounded operator.
 So  it follows  that  the second operator of  of the r.h.s. of  \eqref{e11}  and then $I_1$  is also compact.
 
Set  $I_2= V_\theta(h_{0}-z)^{-1} \partial_{s}(1-G_{\theta}) \partial_{s} (\tilde H_{0, \theta}-z )^{-1} .$
 Let us show $ \partial_{s}(1-G_{\theta}) \partial_{s} (\tilde H_{0, \theta}(F)-z )^{-1}, \;  \Im z  >0  $ is a bounded
 operators.  In view of  the Corollary \ref{l1}, we are left to show that  $ \partial_{s}(1-G_{\theta}) \partial_{s} (\tilde H_{0}(F)-z )^{-1}$ is  bounded. We have 
 $$ \partial_{s}(1-G_{\theta}) \partial_{s} (\tilde H_{0}(F)-z )^{-1} =  \partial_{s}(1-G_{\theta}) \partial_{s} (H_{0}-z )^{-1} 
 +   \partial_{s}(1-G_{\theta})\partial_{s} (H_{0}-z )^{-1} \tilde W(F) (\tilde H_{0}(F)-z )^{-1}. $$ 
 By the closed graph theorem $\partial_{s}(1-G_{\theta}) \partial_{s} (H_{0}-z )^{-1}$  is bounded. Now  the second term of the r.h.s. of this equality can be written as 
  \begin{multline}
 \partial_{s}(1-G_{\theta}) \partial_{s}  l (H_{0}-z )^{-1}  \frac{\tilde W(F)}{l} ( \tilde H_{0}(F)-z )^{-1}  
+\\   \partial_{s}(1-G_{\theta}) \partial_{s}  (H_{0}-z )^{-1}[H_0, l] (H_{0}-z )^{-1} \frac{ \tilde W(F)}{l} (\tilde H_{0}(F)-z )^{-1}. 
\end{multline}
 Notice that under  (h2) then $(1-G_{\theta} )l$ as well as $(1-G_{\theta}) l'$ are bounded functions. Hence following the same arguments as above we are done. Evidently $I_3=  V_\theta(h_{0}-z)^{-1} S_{\theta}  (\tilde H_{0, \theta}-z )^{-1} $ is also  a compact operator. This proves our claim.

 In the other hand   the Proposition \ref{th} implies ii).
  \end{proof}
  %%%%%%%%%%%%%%%%%%%%%%%%%%%
 We now prove the first part of the Theorem  \ref{t0}.\\
   Let $0<F\leq F_0$ and $E<0$. For  $ 0<\vert \theta \vert < \beta_0, \Im \theta >0$   by   the Proposition \ref{th}, the Lemma \ref{l2} and the Fredholm alternative  theorem,   the operator $ \mathbb I_{L^2(\Omega)}+  K_{\theta}(F,z)$  is invertible for all    $z \in \nu_\theta \setminus \mathcal R$ where $\mathcal R$ is a discrete  set. In 
 the bounded operator sense, we have  
\begin{equation} \label{Res}
( H_\theta(F)-z)^{-1}= ( \tilde H_{0,\theta}(F)-z)^{-1} \big(\mathbb I_{L^2(\Omega)}+  K_{\theta}(F,z) \big)^{-1}.
\end{equation}
This implies that $ \nu_\theta \setminus \mathcal R \subset \rho( H_\theta(F))$.

Choose $\beta_0$ so small that  there exists a   dense subset  of analytic vectors associated to
the transformation $U_\theta$ in $\vert \theta \vert <\beta_0 $ (see \cite [Remark (3.3)] {BG}). We denote  this set by $ \mathcal A$.
Then standards arguments of the distortion theory, and  \eqref{Res}
imply that for all $\varphi \in \mathcal A$
 \begin{equation} \label{Rzz}
{\cal R}_\varphi(z)=   \big(  ( H(F)-z)^{-1}\varphi, \varphi\big), \Im z >0
\end{equation}
has an  meromophic extension  in $ \nu_\theta$ 
given by 
$$ {\cal R}_\varphi(z)=  \big(  ( \tilde H_{0,\theta}(F)-z)^{-1} \big( \mathbb I_{L^2(\Omega)} +  K_{\theta}(F,z) \big)^{-1}\varphi_\theta,\varphi_{\bar \theta} \big)$$ 
 We define  the resonances of the operator $H_\theta(F)$ as the poles of ${\cal R}_\varphi$. They  are  locally $\theta $-independent and   in view of \eqref{Res} they  are the discrete eigenvalues of the operator $H_\theta(F)$.   
\qed

%We define the resonances of $H(F)$ in $\nu_\theta$

\section{Resonances}

We want to prove the following result, let $\theta= i \beta$,  $ 0< \beta  \leq \beta_0$, $\beta_0$ as in the Proposition \ref{l2} and the proof of the Theorem  \ref{t0}  i). Then 
\begin{proposition}
\label{pT}
 Let $E_{0}$ is a negative eigenvalue of $H$ with multiplicity $n$. There exists $ F_0>0$ such that for $0<F\leq F_0$,$H_\theta(F)$  has  exactly $n$ eigenvalues   denoted by  $Z_0, ..., Z_{n-1}$,  satisfying $\dlim_{F\rightarrow 0}|Z_j-E_0|=0.$
\end{proposition}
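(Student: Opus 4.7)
\emph{Proof proposal.} The plan is a Riesz projection argument in the spirit of \cite{BG}, comparing the spectral projection of $H_\theta(F)$ around $E_0$ with that of the field-free distorted operator $H_\theta := U_\theta H U_\theta^{-1}$, for which $E_0$ is still an isolated eigenvalue of multiplicity $n$. First I would fix a circle $\Gamma_\rho = \{z \in \mathbb{C} : |z - E_0| = \rho\}$ with $\rho > 0$ small enough that $\Gamma_\rho \cap \sigma(H) = \emptyset$ and only $E_0$ of $\sigma(H)$ lies inside $\Gamma_\rho$ (possible since $E_0$ is isolated). For real $\theta$ the map $U_\theta$ is unitary, so $\sigma(H_\theta) = \sigma(H)$; by analytic continuation in $\theta$ combined with standard analytic perturbation theory for isolated eigenvalues of finite multiplicity, the same picture persists for all $|\theta| \leq \beta_0$ upon shrinking $\beta_0$, and the Riesz projection $P_\theta = -\frac{1}{2\pi i}\oint_{\Gamma_\rho}(H_\theta - z)^{-1}\,dz$ has rank $n$.

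Next, using \eqref{nutheta} I would check that $\Gamma_\rho \subset \nu_\theta$ for $\theta = i\beta$ with $\beta$ small. Section 5 then implies that $(H_\theta(F) - z)^{-1}$ extends meromorphically across $\Gamma_\rho$, and it remains to show that for $0 < F \leq F_0$ the poles stay strictly inside $\Gamma_\rho$, so that
\[
P(F, \theta) := -\frac{1}{2\pi i}\oint_{\Gamma_\rho}(H_\theta(F) - z)^{-1}\,dz
\]
is well defined. Both this and the dimension count follow once I establish the central technical step: $\|P(F,\theta) - P_\theta\| \to 0$ as $F \to 0$.

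The crux is therefore the norm convergence of the resolvents on $\Gamma_\rho$. Starting from the second resolvent identity
\[
(H_\theta(F) - z)^{-1} - (H_\theta - z)^{-1} = -(H_\theta(F) - z)^{-1}\, W_\theta(F)\, (H_\theta - z)^{-1},
\]
the obstacle is that $W_\theta(F)$ is unbounded. My plan is to decompose $(H_\theta - z)^{-1} = -P_\theta/(z - E_0) + A_\theta(z)$ with $A_\theta$ holomorphic on and inside $\Gamma_\rho$. Since $E_0 < \lambda_0$, a Combes--Thomas argument gives exponential decay in $s$ of the eigenfunctions spanning $\mathrm{Range}\,P_\theta$; together with the fact from \eqref{f2} that the distortion is trivial on any fixed compact in $s$ for $F$ small, this yields $\|W_\theta(F) P_\theta\| = O(F)$. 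For the regular piece I would rewrite $(H_\theta(F) - z)^{-1}$ via the Birman--Schwinger form \eqref{Res}, so that the uniform bound of Proposition \ref{th} on $(\tilde H_{0,\theta}(F) - z)^{-1}$ and the compactness of $K_\theta(F,z)$ from Proposition \ref{l2} provide uniform-in-$z \in \Gamma_\rho$ estimates that let $W_\theta(F) A_\theta(z)$ be controlled.

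Once $\|P(F, \theta) - P_\theta\| \to 0$ is in hand, rank stability (Kato) gives $\mathrm{rank}\,P(F, \theta) = n$ for $F \leq F_0$, so $H_\theta(F)|_{\mathrm{Range}\,P(F, \theta)}$ is an $n \times n$ matrix whose eigenvalues $Z_0(F), \ldots, Z_{n-1}(F)$ are exactly the eigenvalues of $H_\theta(F)$ inside $\Gamma_\rho$. To obtain $|Z_j(F) - E_0| \to 0$, I would rerun the construction with a sequence $\rho_k \downarrow 0$ and matching thresholds $F_0(\rho_k) \downarrow 0$: for $F \leq F_0(\rho_k)$ all $n$ eigenvalues lie in the disk of radius $\rho_k$ around $E_0$. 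I expect the main obstacle to be exactly the projection convergence; the essential inputs that make it work here are the exponential localization of the $E_0$-eigenfunctions of $H$ (granted by $E_0 < \lambda_0$), the fact that the distortion is supported in $|s| \gtrsim 1/F$, and the decay \eqref{dW1}--\eqref{dW2} of the correction $W - \tilde{W}$.
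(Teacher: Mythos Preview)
Your overall Riesz projection framework is the right one, and the Combes--Thomas bound $\|W_\theta(F)P_\theta\|=O(F)$ for the singular part is reasonable. The gap is in the regular part: you assert that the Birman--Schwinger form~\eqref{Res} together with the compactness of $K_\theta(F,z)$ ``let $W_\theta(F)A_\theta(z)$ be controlled,'' but this step does not go through. The interaction $W_\theta(F)$ grows linearly in $|s|$, while $A_\theta(z)$ only maps into $D(H_0)={\mathcal H}_0^1\cap{\mathcal H}^2$, which carries no decay in $s$; hence $W_\theta(F)A_\theta(z)$ is simply unbounded, and no uniform bound on $(\tilde H_{0,\theta}(F)-z)^{-1}$ or compactness of $K_\theta(F,z)$ fixes this. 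There is also a circularity you have not resolved: you need $\|(H_\theta(F)-z)^{-1}\|$ uniformly bounded on $\Gamma_\rho$ as a factor in the very estimate that is supposed to prove that the poles stay off $\Gamma_\rho$. A further subtlety you pass over is that the distortion $U_\theta$ itself depends on $F$ (through $f$ in~\eqref{field}), so your ``field--free'' operator $H_\theta=U_\theta H U_\theta^{-1}$ and its projection $P_\theta$ are $F$-dependent objects.

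The paper avoids all of this by never comparing the full resolvents. It reduces the proposition (via \cite[Section~5]{BG}) to the norm convergence $\|K_\theta(F,z)-K(z)\|\to0$ uniformly on compact subsets of $\nu_\theta\cap\rho(H_0)$, where $K(z)=V_0(H_0-z)^{-1}$ is the genuinely $F$-independent Birman--Schwinger kernel of the undistorted, field-free problem. Since $V_\theta=V_{0,\theta}+R_\theta(F)$ is bounded and decays like $|s|^{-(\varepsilon-2)}$, only bounded (in fact compact) operators enter, and the linear growth of $W_\theta(F)$ never appears as a naked multiplier. The technical work is then to compare the \emph{reference} resolvents $(\tilde H_{0,\theta}(F)-z)^{-1}$ and $(H_0-z)^{-1}$: the paper interposes the auxiliary operator $H_{0,\theta}$ of Section~3, uses Herbst's weight trick with $l(s)=(1+s^2)^{1/2}$ to handle the $\tilde W_\theta(F)$ contribution, and for the piece coming from $i\beta f'$ in $T_s-T_{s,\theta}$ upgrades strong convergence to norm convergence by sandwiching with the compact factor $V_{0,\theta}(H_0-z)^{-1}$. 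Once $K_\theta(F,z)\to K(z)$ is known, invertibility of $I+K(z)$ on $\Gamma_\rho$ transfers to $I+K_\theta(F,z)$ for small $F$, which via~\eqref{Res} gives both the absence of poles on $\Gamma_\rho$ and the uniform resolvent bound there---exactly the inputs your argument was missing.
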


 The Proposition \ref  {pT} implying the Theorem \ref{t0} ii).
\begin{proof}
Following \cite[Section 5] {BG}, to  prove the Proposition \ref{pT}  we  only have to  prove   that under 
(h1-3), for   $E<0$, $ \theta =i \beta, 0<\beta \leq \beta_0$.  Then
\begin{equation} \label {14}
  \Vert K_{\theta}(F,z)- K(z) \Vert \rightarrow 0  \;  { \rm{as}} \; F\rightarrow 0 
\end{equation}
uniformly in $z\in  {\mathcal K}$,  where  $ {\mathcal K}$ is  a compact subset of $\nu_\theta\bigcap \rho(H_0)$.
By a continuity argument it is sufficient to  prove that  for some $z_0 \in {\mathcal K}$.  We have 
\begin{align}\notag
K_{\theta}(F,z_0)- K(z_0) =& (V_{0,\theta}-V_0) (H_{0}-z_0)^{-1}  + (W_{\theta}(F)-\tilde W_{\theta}(F)) (\tilde{H}_{0,\theta}(F)-z_0)^{-1}  + \\ \label {15}
&V_{0,\theta}((\tilde{H}_{0,\theta}(F)-z_0)^{-1}-(H_{0}-z_0)^{-1}).
\end{align}
From \eqref{15}, the proof needs several steps. First we consider the two first term of the r.h.s. of \eqref{15}. 
In view of     \eqref{15} and  \eqref{unitaire}, we can see that   $$ \Vert V_{0,\theta}-V_0 \Vert_\infty \leq C \big(   \Vert \gamma_{\theta}-\gamma \Vert_\infty
 +   \Vert \gamma'_{\theta}-\gamma' \Vert_\infty+  \Vert \gamma^{''}_{\theta}-\gamma^{''} \Vert_\infty
\big)$$
 for some constant $C>0$. Hence we can estimate each term of the r.h.s. of this last inequality  by using   Taylor expansions w.r.t. $\theta$ (see e.g. \eqref{gamtheta}.   Then we find
 
 $$\Vert\gamma_{\theta}-\gamma \Vert_\infty,\Vert \gamma'_{\theta}-\gamma' \Vert_\infty,  \Vert \gamma^{''}_{\theta}-\gamma^{''} \Vert_\infty = O(\beta F ^{\varepsilon-1})$$
 and  then  $\Vert  (V_{0,\theta}-V_0) (H_{0}-z_0)^{-1}  \Vert \to 0$ as $F\to 0$. The Proposition  \ref{th}   and \eqref{dWtheta} imply that $\Vert (W_{\theta}(F)-\tilde W_{\theta}(F)) (\tilde{H}_{0,\theta}(F)-z_0)^{-1}\Vert   \to 0$ as $F\to 0$.
  %Since $f$ is supported in $\{ \vert  s \vert > c'/F\}$ 
Hence we are left to show that this is true for the third term of the r.h.s of \eqref{15} i.e.
$$   \delta H (z_0):= V_{0,\theta}((\tilde{H}_{0,\theta}(F)-z_0)^{-1}-(H_{0}-z_0)^{-1}). $$
 Let $H_{0,\theta}$ be the operator introduced in the Section 3. We use the estimate
\begin{align} \notag
\Vert  \delta H (z_0)& \Vert \leq  \Vert V_{0,\theta} ((H_{0,\theta}-z_0)^{-1}- (\tilde{H}_{0,\theta}(F)-z_0)^{-1})  \Vert + \\
&\Vert  V_{0,\theta}((H_{0,\theta}-z_0)^{-1}-{H}_{0}-z_0)^{-1})  \Vert.
\end{align} 
 First we consider 
$$  \delta_1 H (z_0)= V_{0,\theta}(H_{0,\theta}-z_0)^{-1} \tilde W_{\theta}(F) (\tilde{H}_{0,\theta}(F)-z_0)^{-1}.$$
Following arguments of the proof of the Proposition \eqref{l2} we have 
\begin{align} \notag
\delta_1 H (z_0) = &V_{0,\theta} l (H_{0,\theta}-z_0)^{-1} \frac { \tilde W_{\theta}(F)}{l} (\tilde{H}_{0,\theta}(F)-z_0)^{-1} + \\
&V_{0,\theta} [(H_{0,\theta}-z_0)^{-1},l] \frac { \tilde W_{\theta}(F)}{l} (\tilde{H}_{0,\theta}(F)-z_0)^{-1}.
\end{align} 
  Clearly  $ \Vert \frac { \tilde W_{\theta}(F)}{l} \Vert  \to 0 $ as $ F\to 0$.   
   In the other hand  we know that   $V_{0,\theta}, V_{0,\theta} l$  are  bounded  uniformly w.r.t.
   $F$. In view of the Propositions  \ref{HOAp} and   \ref{th}, this also holds for the resolvents
   $(H_{0,\theta}-z_0)^{-1}$ and $(\tilde{H}_{0,\theta}(F)-z_0)^{-1}$. Let us show  that this  is again true  for $ [(H_{0,\theta}-z_0)^{-1},l]$ then this will imply that   $ \Vert \delta_1 H (z_0)\Vert \to 0 $ as $F\to 0$.\\
 We have, 
  $$ [(H_{0,\theta}-z_0)^{-1},l]= -(H_{0,\theta}-z_0)^{-1}( (G_\theta l')' +  2l'G_\theta  \partial_s)    (H_{0,\theta}-z_0)^{-1}.$$
 and   $\Re G_\theta \geq 0, \Im G_\theta \leq 0 $.  It is  then sufficient   to show that $\Vert  (\Re G_\theta)^{1/2}   \partial_s    (H_{0,\theta}-z_0)^{-1}\Vert$ and 
 $\Vert(-\Im G_\theta )^{1/2}  \partial_s    (H_{0,\theta}-z_0)^{-1}\Vert$   are  uniformly 
bounded w.r.t. $F$ for $F$ small.  Recall that  $ H_{0, \theta} = -\partial_s G_\theta \partial_s   +T_u +S_\theta$.  Then  our last claim follows from the arguments evoked above   and   that   for $\varphi \in L^2(\Omega)$, $\Vert  \varphi \Vert =1$,
\begin{align} \notag
\Vert  (\Re G_\theta)^{1/2}  \partial_s    (H_{0,\theta}-z_0)^{-1} \varphi \Vert \leq 
 \Re \big (   (H_{0,\theta}-z_0)^{-1} \varphi , \varphi \big) ) \\  \notag -\Re \big (   (H_{0,\theta}-z_0)^{-1} \varphi, (S_\theta + z_0 )   (H_{0,\theta}-z_0)^{-1}\varphi \big) 
\end{align}
 and 
\begin{align}\notag
\Vert  (-\Im G_\theta)^{1/2}  \partial_s    (H_{0,\theta}-z_0)^{-1} \varphi \Vert =
 -\Im \big (   (H_{0,\theta}-z_0)^{-1} \varphi , \varphi \big) ) \\\notag  +  \Im \big (   (H_{0,\theta}-z_0)^{-1} \varphi, (S_\theta + z_0 )   (H_{0,\theta}-z_0)^{-1}\varphi \big). 
\end{align}
Evidently  this imply that there exists a constant $c'>0$ s.t. 
$$\Vert  (\Re G_\theta)^{1/2}  \partial_s    (H_{0,\theta}-z_0)^{-1} \Vert, \Vert  (-\Im G_\theta)^{1/2}  \partial_s    (H_{0,\theta}-z_0)^{-1} \Vert \leq \Vert   (H_{0,\theta}-z_0)^{-1}\Vert  + c'\Vert   (H_{0,\theta}-z_0)^{-1}\Vert ^2.$$
Now consider
$$  \delta_2 H (z_0)= V_{0,\theta}(H_{0,\theta}-z_0)^{-1}( T_s-T_{s,\theta}) ({H}_{0}-z_0)^{-1}.$$
%Recall that
%$$T_{s}- T_{s,\theta}= -\partial_s \frac{1}{(1+u\gamma)^2}-\frac{1}{(1+u\gamma_\theta)^2 (1+ \theta f')^2}  -\partial_s.$$
For $F$ and $\beta$ small, $ T_{s}- T_{s,\theta}=  \partial_s G \partial_s + S_\theta$ with
 $G=  (\gamma_\theta - \gamma)G_1 + i\beta f' G_2 $  where  $ G_1, G_2$ are  uniformly  bounded functions w.r.t. $F$ (see e.g. \eqref{g} and \eqref{ts teta}.\\
  Moreover we know from \eqref{gamtheta} that $\gamma_\theta - \gamma= O(\beta F^{\varepsilon-1})$,  and 
  $ S_\theta= O(\beta F^2)$ then
   \begin{align} \label {esta}
  \Vert V_{0,\theta}(H_{0,\theta}-z_0)^{-1} (\partial_s (\gamma_\theta - \gamma)G_1  \partial_s +& S_\theta )({H}_{0}-z_0)^{-1} \Vert  \leq  \\ \notag
  C  \beta \big( F^{\varepsilon} \Vert (H_{0,\theta}-z_0)^{-1} \Vert \Vert \partial_s G_3  \partial_s (H_{0}-z_0)^{-1}  \Vert &+  \\ &\notag F^{2} \Vert (H_{0,\theta}-z_0)^{-1}\Vert \Vert  (H_{0}-z_0)^{-1} \Vert \big)
\end{align}
  for some  constant $C>0$. Where $G_3$   is uniformly bounded w.r.t. $F$.  So this term   vanishes as $ F\rightarrow 0 $. 

To study the  second term, we use a  different strategy.   We have  
\begin{align} \label {AA}
  V_{0,\theta}(H_{0,\theta}-z_0)^{-1}& \partial_s f'G_2  \partial_s  ({H}_{0}-z_0)^{-1}= \\ \notag
  & V_{0,\theta}(H_{0}-z_0)^{-1} (H_{0}-z_0) (H_{0,\theta}-z_0)^{-1} \partial_s  f'G_2  \partial_s({H}_{0}-z_0)^{-1}
  \end{align}
We know that the operator $ V_{0,\theta}(H_{0}-z)^{-1} $ is a  compact  operator (see e.g. the proof of the Lemma \ref{l2}) then to prove that  the operator in the l.h.s. of \eqref  {AA}  converges in the norm sense 
to $0_{\mathcal B(L^2(\Omega))}$ as $F\to 0$,   it is sufficient to show that $ (H_{0}-z_0) (H_{0,\theta}-z_0)^{-1} \partial_s  f'G_2  \partial_s({H}_{0}-z_0)^{-1}$ converges strongly to $0_{\mathcal B(L^2(\Omega))}$ as $F\to 0$. \\

 Recall that  $\mathcal{C}=\{\varphi = \tilde \varphi \lfloor_ \Omega, \tilde \varphi \in C_{0}^{\infty}(\mathbb R^2); \tilde \varphi \lfloor_ {\partial \Omega} =0 \}$ is a core of $H_0$, thus for $z\in \rho(H_0)$, $\mathcal C'=(H_0-z_0)\mathcal C$ is dense in $L^{2}(\Omega)$.  Set  $\psi=(H_0-z_0)\varphi$, $ \varphi \in \mathcal C$. \\

%=  V_{0,\theta}(H_{0,\theta}-z_0)^{-1} \partial_s  f'G_2  \partial_s \varphi   
  
Since the field $f$  is choosed s.t. $f'$ has  support contained  in $ \vert s \vert > c'/F$ for some  $c'>0$  then  $\dlim_{F\rightarrow 0}\|\partial_s  f'G_2  \partial_s \varphi\|=0$. By using standard arguments of the perturbation theory and the proposition \ref{HOAp},  for $F$ small, the operator $(H_{0}-z_0) (H_{0,\theta}-z_0)^{-1}$  has a norm which  is uniformly bounded w.r.t $F$. This proves our claim on $\mathcal C'$.

 In the other hand $ \partial_s  f'G_2  \partial_s  ({H}_{0}-z_0)^{-1}$ is bounded operator with a norm  uniformly bounded w.r.t. $F$, for $F$ small.   Then the strong convergence follows.
\end{proof}

\section{ Concluding remarks}
In this last section we would like  to give some remarks about the field regime related to this problem.
  Let us mention  that the first result was given by P. Exner in   \cite{Ex}, for $  \eta =  \frac {\pi}{2}$  and $ \alpha_0 = 0$. 
  But P. Exner did not consider the  question of  existence of resonances. \\
  This issue was addressed  by us in  \cite{BG}. 
In  this paper  we  have considered  the conditions   $ \vert \eta \vert < \frac {\pi}{2} $   and $ \vert  \eta  - \alpha_0 \vert<\frac {\pi}{2}$. Roughly speaking this corresponds to the classical picture of the Stark effect for  one dimensional Schr\" odinger operators with local potential i.e. the field interaction $W(F) \to -\infty $ as $s\to  -\infty$ and  $W(F) \to \infty $ as $s\to  \infty$. For  any  negative reference energy the   non trapping region coinciding  with  a neighbourhood of 
$s=-\infty$.  
In this case we prove an analog of the Theorem \ref {t0}. \\
Evidently   the regime  $ \vert \eta \vert > \frac {\pi}{2} $   and $ \vert  \eta  - \alpha_0 \vert>\frac {\pi}{2}$  is  a symmetric  to the above mentioned case. 

Suppose   now that $ \vert   \eta \vert  >   \frac {\pi}{2}$  and    $\vert  \eta  - \alpha_0 \vert< \frac {\pi}{2}$. Then 
 from \eqref{W201}   and \eqref{W202},   clearly $W(F) \to \infty $ as $ s \to \pm \infty$ i.e.  it     is    a confining potential.  By using standard arguments (see e.g. \cite{reed simonIV}) it easy to see that $H(F)$ has a compact resolvent and then only discrete spectrum. \\
 Indeed consider  the following operator in $L^2(\Omega)$. Let $F>0$,
 $$ h(F)= T_s+ T_u + w(F).$$
 where $ w(F) = F \cos(\eta) s$   if  $s\leq 0$ and $ w(F) = F \cos(\eta - \alpha_0) s$ if  $s> 0$. Then 
by (h1) and (h2), there exists a  strictly positive constant $c$ such that  in the form sense we have 
\begin{equation} \label{f1}
h(F) \geq h_1(F) := (-c \partial_s^2  +  w(F)) \otimes  \mathbb I_u +   \mathbb I_s \otimes  -\partial_u^2.   
\end{equation}
 But  the operator  $-\partial_s^2  +  w(F) $ has a compact resolvent  \cite{reed simonIV}. Let   $\{ p_n,  n\geq 0\}$ the  eigen-projectors corresponding to the  transverse  modes $\{ \lambda_n,  n\geq 0\}$. Since  
 
 $$  (h_1(F)  +1)^{-1}= \oplus_{n\geq 1} (-c\partial_s^2  +  w(F)  + \lambda_n +1)^{-1}) \otimes p_n$$
 Then $  (h_1(F)  +1)^{-1}$ is a compact operator as a norm limit of compact operators. Hence $h_1(F)$ satisfies
 the Rellich criterion   \cite{reed simonIV} and  in view  of \eqref{f1} it is also true for $h(F)$ so 
 $  (h(F)  +1)^{-1}$ is compact. Since $H(F) -h(F)$ is bounded then $  (H(F)  +1)^{-1}$ is also  a compact operator.

\newpage

\end{document}